\def\1{\raisebox{2pt}{\rm{$\chi$}}}
\newtheorem{theorem}{Theorem}[section]
\newtheorem{lemma}[theorem]{Lemma}
\newtheorem{definition}[theorem]{Definition}
\theoremstyle{definition}
\newtheorem{remark}[theorem]{Remark}
\newcommand{\R}{{\mathbb R}}
\newcommand{\N}{{\mathbb N}}
\newcommand{\Z}{{\mathbb Z}}
\newcommand{\M}{{\mathcal M}}
\newcommand\diam{\operatorname{diam}}
\newcommand{\defeq}{\mathrel{\mathop:}=} 
\newcommand{\co}{\mskip0.5mu\colon\thinspace}   
\newcommand{\eps}{{\varepsilon}}
\def\1{\raisebox{2pt}{\rm{$\chi$}}}
\newcommand{\Lip}{\operatorname{Lip}}
\newcommand{\len}{\operatorname{len}} 
\def\vint_#1{\mathchoice%
        {\mathop{\kern 0.2em\vrule width 0.6em height 0.69678ex depth -0.58065ex
                \kern -0.8em \intop}\nolimits_{\kern -0.4em#1}}%
        {\mathop{\kern 0.1em\vrule width 0.5em height 0.69678ex depth -0.60387ex
                \kern -0.6em \intop}\nolimits_{#1}}%
        {\mathop{\kern 0.1em\vrule width 0.5em height 0.69678ex
            depth -0.60387ex
                \kern -0.6em \intop}\nolimits_{#1}}%
        {\mathop{\kern 0.1em\vrule width 0.5em height 0.69678ex depth -0.60387ex
                \kern -0.6em \intop}\nolimits_{#1}}}
\def\vintslides_#1{\mathchoice%
        {\mathop{\kern 0.1em\vrule width 0.5em height 0.697ex depth -0.581ex
                \kern -0.6em \intop}\nolimits_{\kern -0.4em#1}}%
        {\mathop{\kern 0.1em\vrule width 0.3em height 0.697ex depth -0.604ex
                \kern -0.4em \intop}\nolimits_{#1}}%
        {\mathop{\kern 0.1em\vrule width 0.3em height 0.697ex depth -0.604ex
                \kern -0.4em \intop}\nolimits_{#1}}%
        {\mathop{\kern 0.1em\vrule width 0.3em height 0.697ex depth -0.604ex
                \kern -0.4em \intop}\nolimits_{#1}}}
\newcommand{\aveint}[2]{\mathchoice%
        {\mathop{\kern 0.2em\vrule width 0.6em height 0.69678ex depth -0.58065ex
                \kern -0.8em \intop}\nolimits_{\kern -0.45em#1}^{#2}}%
        {\mathop{\kern 0.1em\vrule width 0.5em height 0.69678ex depth -0.60387ex
                \kern -0.6em \intop}\nolimits_{#1}^{#2}}%
        {\mathop{\kern 0.1em\vrule width 0.5em height 0.69678ex depth -0.60387ex
                \kern -0.6em \intop}\nolimits_{#1}^{#2}}%
        {\mathop{\kern 0.1em\vrule width 0.5em height 0.69678ex depth -0.60387ex
                \kern -0.6em \intop}\nolimits_{#1}^{#2}}}
\title[Self-improvement of the pointwise Hardy inequality]{Self-improvement of pointwise Hardy inequality}
\author[S.\! Eriksson-Bique]{Sylvester Eriksson-Bique}   
\address[S.E.-B.]{  Department of mathematics, UCLA, 520 Portola Plaza, Los Angeles CA 90095, USA }
\email{syerikss@math.ucla.edu}
\author[A.V.\! V\"ah\"akangas]{Antti V. V\"ah\"akangas}
\address[A.V.V.]{University of Jyvaskyla, Department of Mathematics and Statistics, P.O. Box 35, FI-40014 University of Jyvaskyla, Finland} 
\email{antti.vahakangas@iki.fi}
\date{\today}
\begin{document}

\keywords{Self-improvement, pointwise Hardy inequality, uniform fatness, metric space}
\subjclass[2010]{Primary 31C15, Secondary 31E05, 35A23}


\begin{abstract}
We prove the  self-improvement of
a pointwise $p$-Hardy inequality. The proof relies on
 maximal function techniques and a characterization of the inequality by curves.
\end{abstract}

\maketitle

\section{Introduction}

Let $X=(X,d,\mu)$ be a metric measure space and
let $1\le p<\infty$. In this paper we are interested in the self-improvement properties of the
\emph{pointwise $p$-Hardy inequality}
\begin{equation}\label{eq.hardy_intro}
|u(x)| \leq  C_{\mathrm{H}}\, d(x,\Omega^c) (\M_{p,\kappa   d(x,\Omega^c)} g (x))\,.
\end{equation}
We say that an open set $\Omega\subsetneq X$ satisfies pointwise $p$-Hardy inequality,
if there are constants $C_{\mathrm{H}}$ and $\kappa$ such that inequality~\eqref{eq.hardy_intro} holds for all 
 $x\in \Omega$ whenever $u$ is a Lipschitz function such that $u=0$ in $\Omega^c=X\setminus \Omega$ and $g$ is 
 a bounded upper gradient of $u$; we refer to
Section~\ref{s.notation} for the definition
of  $\M_{p,\kappa   d(x,\Omega^c)} g(x)$ and the standing assumptions on $X$.
By H\"older's inequality, we see that increasing $p$ will result in a weaker inequality \eqref{eq.hardy_intro}. 
 Self-improvement is concerned with the opposite, and far less intuitive,
possibility of lowering the exponent $p$ slightly. Our main result reads as follows.
Let $1\le  p'<p<\infty$ and assume that $X$ supports
a $p'$-Poincar\'e inequality. 
Assume that  $\Omega$ satisfies a pointwise $p$-Hardy inequality.
Then there exists $q\in (p',p)$ such that  
$\Omega$ satisfies a pointwise $q$-Hardy inequality; we refer to Theorem~\ref{c.main}.
In this paper we  provide  a direct  proof of this self-improvement result with transparent and quantitative bounds for the quantity $p-q>0$ of self-improvement; see Remark \ref{r.quantitative}.

The pointwise $p$-Hardy inequality was first independently studied
by Haj{\l}asz in \cite{MR1458875} and 
by Kinnunen--Martio in \cite{MR1470421}.
Korte et al.\ proved in \cite{MR2854110}
that a pointwise $p$-Hardy inequality characterizes
the so-called uniform $p$-fatness of the complement $\Omega^c$;
we note that uniform $p$-fatness is a uniform $p$-capacity density condition that appears often in potential theory
and PDE's, see e.g.\ \cite{MR1207810}.
Consequently, our proof can be used to show 
the deep self-improvement property
of  uniform $p$-fatness.
This result  was first discovered in Euclidean spaces by Lewis \cite{MR946438}
using potential theoretical arguments.
Subsequently  Mikkonen generalized Lewis' result to the Euclidean weighted setting in his PhD-thesis \cite{MR1386213}. 
Mikkonen's approach, in turn, was  adapted to metric spaces by Bj\"orn et al.\ in \cite{MR1869615}.
This adaptation relies on
the impressive theory of differential structures on
complete (or at least locally complete) metric spaces, 
established by Cheeger in~\cite{MR1708448}.

An alternative approach to the self-improvement
of uniform $p$-fatness   
was recently provided by Lehrb\"ack et al.\ in \cite{MR3673660}.
Their proof builds upon a localization of the
argument due to Koskela--Zhong  \cite{MR1948106} which, in turn, is concerned with the self-improvement
of integral $p$-Hardy inequalities.
Consequently, either one 
of the papers \cite{MR1869615} or \cite{MR3673660} together  with the mentioned characterization in \cite{MR2854110}
can be used to provide an indirect proof of our main result.
In comparison, our  approach is more direct with the additional benefit of  yielding transparent and  quantitative bounds for the self-improvement. Our approach is new even in the classical setting of
Euclidean space equipped with the Euclidean metric and the Lebesgue measure. For a survey on Hardy inequalities, and their connections
to uniform $p$-fatness, we refer to \cite{MR2723821} and references therein. See also \cite{MR2777530}.

The outline of this paper is as follows.
Notation and maximal function techniques are presented in Section \ref{s.notation}.
The pointwise $p$-Hardy inequality is characterized by using curves
in Section~\ref{s.char}.
The actual work for self-improvement via curves is done in Section \ref{e.self_improvement}
and our main results are stated and proved in Section \ref{s.main}.
The main line of our proof is adapted from the paper \cite{SEB} of the first author, where
the self-improvement of a $p$-Poincar\'e inequality is
proved with the aid of maximal functions 
and a characterization  by curves; this result was originally obtained in \cite{MR2415381}
by a different method.
Curiously, the present approach simultaneously explains the self-improvement property of both  $p$-Poincar\'e inequality  and pointwise $p$-Hardy inequality.
We also remark that  Lerner--P\'erez \cite{MR2375698}
established self-improvement properties of Muckenhoupt weights
by a  similar approach to maximal functions. 
It is an open question, to  what extent these ideas can be taken to unify proofs of 
various self-improvement phenomena that are ubiquitous in analysis and PDE.

\medskip

{\bf Acknowledgements.} The authors would like to thank Juha Kinnunen and Juha Lehrb\"ack for their valuable comments.
 The first author is partially supported by the grant DMS\#-1704215 of NSF(U.S.). The first author also thanks 
Enrico Le Donne, Riikka Korte and Juha Kinnunen for hosting and supporting visits at University of Jyv\"askyl\"a and Aalto University during which 
this research was completed.

\section{Notation and auxiliary results}\label{s.notation}

Here, and throughout the paper, we assume that $X=(X,d,\mu)$ is a $C_{\mathrm{QC}}$-quasiconvex  proper metric measure space equipped with a metric $d$ and a 
positive complete  $D$-doubling  Borel
measure $\mu$ such that $\# X\ge 2$, $0<\mu(B)<\infty$ and
\begin{equation}\label{e.doubling}
\mu(2B) \le D\, \mu(B)
\end{equation}
for some $D>1$ and for all balls $B=B(x,r)=\{y\in X\,:\, d(y,x)<r\}$. 
Here we use for $0<\lambda <\infty$ the notation $\lambda B=B(x,\lambda r)$.
The space $X$ is separable
under these assumptions, see \cite[Proposition 1.6]{MR2867756}.
Moreover, the measure  $\mu$ is regular
and, in particular
for every Borel set $E\subset X$ and every $\varepsilon>0$, there
exists an open set $V\supset E$ such that
$\mu(E)\le \mu(V)+\varepsilon$; we refer to \cite[Theorem 7.8]{MR767633} for further details.

We denote by $\Lip(X)$ the space of Lipschitz functions on $X$. That is, we have $u\in \Lip(X)$ iff there
exists a constant $\lambda>0$ such that
\[
\lvert u(x)-u(y)\rvert \le \lambda d(x,y)\,,\qquad \text{\ for all }\ x,y\in X\,.
\]
We let  $\Omega \subset X$ be an open set. We denote by $\Lip_{0}(\Omega)$ 
the space of Lipschitz 
functions on $X$ that vanish on $\Omega^c= X \setminus \Omega$. The set of continous functions on $X$ is denoted by $C(X)$, and $C_0(\Omega) \subset C(X)$ consists of those continuous functions that vanish on $\Omega^c$. We  denote by $LC(X)$  the set  of lower semicontinuous functions on $X$, and 
by $LC_0(\Omega)$ we denote the set of those functions in $LC(X)$
that vanish on $\Omega^c$.  

By a {\em curve} we mean a nonconstant, rectifiable, continuous
mapping from a compact real interval to $X$. By $\Gamma(X)$ we denote the set of all curves in $X$. The
length of a curve $\gamma\in \Gamma(x)$ is  written as
$\len(\gamma)$.
We say that a curve $\gamma\colon [a,b]\to X$ {\em connects}
$x\in X$ to $y\in X$ (or a point $x\in X$ to a set $E\subset X$), if $\gamma(a)=x$ and $\gamma(b)=y$ ($\gamma(b)\in E$, respectively). 
If $x,y \in X$, $E \subset X$ and $\nu \geq 1$ we denote by $\Gamma(X)^\nu_{x,y}$ the set of curves that connect $x$ to $y$ and  whose lengths are at most $\nu d(x,y)$, and by $\Gamma(X)^\nu_{x,E}$ we denote the  set of curves that connect   $x$ to $E$   and whose  lengths are   at most $\nu d(x,E)$. 

We say that a Borel function $g\ge 0$ on $X$ is an 
{\em upper gradient} of a real-valued function $u$ on
$X$ if, for   any curve $\gamma$  connecting any $x\in X$ to any  $y\in X$,    we have
\begin{equation}\label{e.modulus}
\lvert u(x)-u(y)\rvert \le \int_\gamma g\,ds\,.\end{equation}

We use the following familiar notation: \[
u_E=\vint_{E} u(y)\,d\mu(y)=\frac{1}{\mu(E)}\int_E u(y)\,d\mu(y)
\]
is the integral average of $u\in L^1(E)$ over a measurable set $E\subset X$
with $0<\mu(E)<\infty$. Moreover if $E\subset X$, then $\mathbf{1}_{E}$
denotes the characteristic function of $E$; that is, $\mathbf{1}_{E}(x)=1$ if $x\in E$
and $\mathbf{1}_{E}(x)=0$ if $x\in X\setminus E$.
If $1\le p<\infty$ and $u\colon X\to \R$ is a $\mu$-measurable function, then $u\in L^p_{\textup{loc}}(X)$ 
means that for each  $x_0\in X$ there exists $r>0$  such that $u\in L^p(B(x_0,r))$, 
i.e., $\int_{B(x_0,r)} \lvert u(y)\rvert^p\,d\mu(y)<\infty$.

For   $0\le r<\infty$ and  $1\le p<\infty$, and every $f \in L^p_{\mathrm{loc}}(X)$,   we define    the $r$-restricted maximal   function $\M_{p,r} f(x)$  at $x\in X$ by \[\M_{p,r} f(x) \defeq 
\begin{cases} 
\lvert f(x)\rvert\,,\qquad &r=0\,,\\
\displaystyle\sup_{B} \left( \vint_{B} \lvert f(z)\rvert^p ~d\mu(z) \right)^\frac{1}{p}\,,\qquad &r>0\,,
\end{cases}
\]
where the supremum is taken over all balls $B=B(y,t)$ in $X$
such that $x\in B$ and $0<t<r$.

The definition of a pointwise $p$-Hardy inequality is as follows; recall that  $\Omega^c=X\setminus \Omega$.

\begin{definition}\label{def:ptwisePHardy}
 Let $1\le p<\infty$. An open  set  
$\emptyset\not=\Omega \subsetneq X$ is said to satisfy a {\em pointwise $p$-Hardy inequality} if there exists constants $C_{\mathrm{H}}>0$ and  $\kappa  \ge 1$ such that for every Lipschitz function $u \in \Lip_{0}(\Omega)$, every   bounded  upper gradient $g$ of $u$ and  every  $x \in \Omega$, we have
\begin{equation}\label{e.pointwise}
|u(x)| \leq  C_{\mathrm{H}}\, d(x,\Omega^c) (\M_{p,\kappa   d(x,\Omega^c)} g (x))\,.
\end{equation}
\end{definition}

Clearly by H\"older's inequality,  a pointwise $p$-Hardy inequality
implies a pointwise $q$-Hardy inequality for every $1\le p<q<\infty$. 

The following $p$-Poincar\'e inequality
has a corresponding property.

 \begin{definition}
 Let $1\le p < \infty$.
We say that $X$ supports a $p$-Poincar\'e inequality, if 
there are constants   $C_{\mathrm{PI}}>0$   and   $\lambda\ge 1$   such that 
for any   ball $B$ of radius $r>0$   in $X$, any 
  $u\in \Lip(X)$   and any bounded upper gradient $g$ of $u$, we have
\begin{equation}\label{e.poincare}
\vint_{B} \lvert u(x)-u_{B}\rvert\,d\mu(x)
\le C_{\mathrm{PI}} \,r\bigg(\vint_{\lambda B} g(x)^p \,d\mu(x)\bigg)^{1/p}\,.
\end{equation}
Here  $u_B=\vint_B u\,d\mu$.
\end{definition}

We remark that the $p$-Poincar\'e inequality has a self-improving property.
More specifically, a $p$-Poincar\'e inequality for any $1<p<\infty$ implies a $p'$-Poincar\'e inequality for some $p'<p$; see \cite{SEB}
and \cite{MR2415381}.
For a self-contained exposition, we will explicitly assume such an improved Poincar\'e inequality.
The following characterization from \cite[Theorem 1.5]{SEB} 
will be useful.

\begin{lemma}\label{lem:PIchar}
  Let $1\le p<\infty$.  
Then  $X$   supports   a $p$-Poincar\'e inequality if and only if 
there are constants   $C_{\mathrm{A}}>0$, $\nu>C_{\mathrm{QC}}$    and $\kappa\ge 1$ such that, for any non-negative and bounded  $g \in LC(X)$ and any $x,y \in X$,   we have   
\begin{equation}\label{eq:PIchar}
\inf_{\gamma \in \Gamma(X)^\nu_{x,y}} \int_{\gamma} g \,ds \leq   C_{\mathrm{A}}  \,d(x,y) \left( \M_{p,\kappa d(x,y)} g (x) + \M_{p,\kappa d(x,y)} g (y)\right). 
\end{equation}
\end{lemma}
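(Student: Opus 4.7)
The equivalence splits into two directions, of which only the second requires substantial work.

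For the direction $\eqref{eq:PIchar} \Rightarrow p$-PI, fix a ball $B = B(z_0, r)$, a Lipschitz function $u$, and a bounded upper gradient $g$. For $x, y \in B$ the upper gradient inequality together with \eqref{eq:PIchar} and $d(x, y) \leq 2r$ yield
\[
|u(x) - u(y)| \leq \inf_{\gamma \in \Gamma(X)^\nu_{x,y}} \int_\gamma g \, ds \leq 2 C_{\mathrm{A}} r \bigl( \M_{p, 2\kappa r} g(x) + \M_{p, 2\kappa r} g(y) \bigr),
\]
using monotonicity of the restricted maximal function in its radius parameter. Double-averaging over $B$ gives $\vint_B |u - u_B| \, d\mu \leq 4 C_{\mathrm{A}} r \vint_B \M_{p, 2\kappa r} g \, d\mu$. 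Since every ball defining $\M_{p, 2\kappa r} g(x)$ at a point $x \in B$ lies in $\lambda B := (1 + 4\kappa) B$, the right-hand side is bounded by $C r \bigl( \vint_{\lambda B} g^p \, d\mu \bigr)^{1/p}$ via H\"older's inequality and $L^p$-boundedness of the Hardy--Littlewood maximal operator (with Kolmogorov's weak-type inequality to handle the endpoint $p = 1$).

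The direction $p$-PI $\Rightarrow \eqref{eq:PIchar}$ requires producing an actual curve. Fix distinct $x, y \in X$, set $r = d(x, y)$, and let $g \geq 0$ be bounded and lower semicontinuous. For $\varepsilon > 0$ I would consider the Lipschitz test function
\[
u_\varepsilon(z) := \inf \Bigl\{ \int_\eta (g + \varepsilon) \, ds : \eta \text{ a curve from } x \text{ to } z \Bigr\},
\]
which admits $g + \varepsilon$ as an upper gradient by the triangle inequality in the $(g + \varepsilon)$-weighted pseudometric. Applying $p$-PI along a dyadic chain of balls joining $x$ and $y$ --- balls of radii $2^{-k} r$ around $x$ and around $y$, meeting at scale $r$ --- and telescoping the ball averages of $u_\varepsilon$ (which agree with $u_\varepsilon$ at $x$ and $y$ by continuity), each term contributes at most $(\text{radius}) \cdot \bigl( \vint_{\lambda B_j} (g + \varepsilon)^p \, d\mu \bigr)^{1/p}$. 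Because each enlarged ball $\lambda B_j$ contains $x$ or $y$ and has radius at most a constant multiple of $r$, summing yields
\[
u_\varepsilon(y) \leq C r \bigl( \M_{p, \kappa r} g(x) + \M_{p, \kappa r} g(y) \bigr) + C' \varepsilon r.
\]

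The main obstacle is that $u_\varepsilon$ uses unconstrained curves, while \eqref{eq:PIchar} demands length at most $\nu d(x, y)$. I would resolve this by choosing $\varepsilon$ comparable to the maximal-function quantity on the right-hand side: since any near-minimizer $\eta$ satisfies $\varepsilon \cdot \len(\eta) \leq \int_\eta (g + \varepsilon) \, ds \leq u_\varepsilon(y) + o(1)$, this forces $\len(\eta) = O(r)$, and the resulting length fits under $\nu r$ precisely because $\nu > C_{\mathrm{QC}}$ leaves enough room relative to the chaining constant $C'$. Finally, properness of $X$ together with Arzel\`a--Ascoli (applied to arc-length parametrizations of a minimizing sequence) extracts a genuine minimizer in $\Gamma(X)^\nu_{x,y}$ as $\varepsilon \to 0$, producing $\gamma$ with $\int_\gamma g \, ds$ bounded by the right-hand side of \eqref{eq:PIchar}.
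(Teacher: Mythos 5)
The paper does not prove Lemma~\ref{lem:PIchar}; it is quoted verbatim from \cite[Theorem 1.5]{SEB}, so there is no proof in this paper against which to compare yours. Assessing your argument on its own terms:

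For the implication \emph{$p$-PI $\Rightarrow$ \eqref{eq:PIchar}}, your strategy -- the weighted geodesic function $u_\varepsilon(z)=\inf_\eta\int_\eta(g+\varepsilon)\,ds$, the dyadic chaining giving $u_\varepsilon(y)\le Cr\bigl(\M_{p,\kappa r}g(x)+\M_{p,\kappa r}g(y)\bigr)+C'\varepsilon r$, and above all the device of choosing $\varepsilon$ comparable to the maximal-function quantity $\tau$ so that $\varepsilon\cdot\len(\eta)\le u_\varepsilon(y)+o(1)$ forces $\len(\eta)=O(r)$ -- is correct and is the idea underlying the cited source. The concluding Arzel\`a--Ascoli passage is superfluous: once $\varepsilon\sim\tau$ is fixed, any near-minimizer already lies in $\Gamma(X)^\nu_{x,y}$ and witnesses the infimum bound, so no limit curve needs to be extracted.

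The converse implication has a genuine gap, and your maximal-function averaging step has the endpoints reversed. You need $\vint_B\M_{p,2\kappa r}g\,d\mu\lesssim\bigl(\vint_{\lambda B}g^p\,d\mu\bigr)^{1/p}$. Writing $\M_{p,s}g=\bigl(\M_{1,s}(g^p)\bigr)^{1/p}$, this follows from Kolmogorov's inequality with exponent $1/p$ for the weak-$(1,1)$ restricted maximal operator applied to $g^p\mathbf{1}_{\lambda B}$, and that works precisely when $1/p<1$, i.e.\ for $p>1$. Your parenthetical claim that Kolmogorov ``handles the endpoint $p=1$'' is backwards: at $p=1$ the Kolmogorov constant $(1-q)^{-1}$ blows up, and the required estimate $\vint_B\M_{1,s}g\,d\mu\lesssim\vint_{\lambda B}g\,d\mu$ is exactly $L^1$-boundedness of the maximal operator, which is false (take $g$ an approximate Dirac mass). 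Nor does ``$L^p$-boundedness of Hardy--Littlewood via H\"older'' produce it: applying Jensen reduces to the same false $L^1$ bound. As written, this direction is established only for $p>1$; the case $p=1$ requires a different argument.

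A secondary gap: \eqref{eq:PIchar} is hypothesized only for lower semicontinuous $g$, but the $p$-Poincar\'e inequality must hold for arbitrary bounded Borel upper gradients, and you apply \eqref{eq:PIchar} to such $g$ directly. This is fixable (prove $p$-PI for LSC upper gradients, then pass to a general bounded Borel upper gradient via a Vitali--Carath\'eodory majorant $\tilde g\ge g$ with $\int_{\lambda B}\tilde g^p\le\int_{\lambda B}g^p+\varepsilon$, which is again a bounded upper gradient), but the step should be stated.
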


  We need a few auxiliary results involving maximal functions.
 We begin with the following scale invariant weak-type estimate
that is originally from \cite[Lemma 2.3]{SEB}.

\begin{lemma}\label{lem:HLmaxmaxest}
Fix $1\le q<\infty$ and $0<r,s<\infty$. 
Let   $f \in L^q_{\mathrm{loc}}(X)$,  
let $\Lambda>0$, and define  \[ E_{q,s,\Lambda} =\{x\in X \mid \M_{q,s} f(x) > \Lambda\}\,. \]
Then, for every $x \in X$,
\begin{equation}\label{eq:localMaxMax}
\M_{1,r} \mathbf{1}_{E_{q,s,\Lambda}} (x) \leq \frac{D^5 (\M_{q,{r+3s}} f (x))^q}{\Lambda^q}\,.
\end{equation} 
\end{lemma}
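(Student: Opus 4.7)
The plan is to verify the inequality ball by ball: fix $x \in X$ and any admissible ball $B = B(y,t)$ with $x \in B$ and $0 < t < r$, and show that
\[
\frac{\mu(E_{q,s,\Lambda} \cap B)}{\mu(B)} \leq \frac{D^5}{\Lambda^q} (\M_{q, r+3s} f(x))^q\,.
\]
Taking the supremum over such $B$ would then give the claimed pointwise estimate. This is essentially the standard weak-type $(q,q)$ argument for the Hardy--Littlewood maximal function, localized so as to respect both scale parameters $r$ and $s$.

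For each $z \in E_{q,s,\Lambda} \cap B$ the hypothesis $\M_{q,s} f(z) > \Lambda$ yields a ball $B_z = B(y_z, t_z)$ with $z \in B_z$, $0 < t_z < s$ and $\vint_{B_z} |f|^q \ud\mu > \Lambda^q$. I would extract a pairwise disjoint subfamily $\{B_i\}$ from $\{B_z\}$ via the usual $5r$-covering lemma so that $E_{q,s,\Lambda} \cap B \subset \bigcup_i 5B_i$. Since each $B_i$ meets $B$ and has radius less than $s$, the triangle inequality places $\bigcup_i B_i$ inside the enlarged ball $B^* := B(y, t+2s)$. The key point is that $B^*$ contains $x$ (because $x \in B \subset B^*$) and has radius $t+2s < r+3s$ (because $t < r$), so $B^*$ is admissible in the definition of $\M_{q, r+3s} f(x)$. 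Chaining doubling ($\mu(5B_i) \leq D^3 \mu(B_i)$, since $5 \leq 2^3$), disjointness, and the defining inequality on each $B_i$ then gives
\[
\mu(E_{q,s,\Lambda} \cap B) \leq D^3 \sum_i \mu(B_i) < \frac{D^3}{\Lambda^q} \int_{\bigcup_i B_i} |f|^q \ud\mu \leq \frac{D^3 \mu(B^*)}{\Lambda^q} (\M_{q, r+3s} f(x))^q\,.
\]

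Dividing by $\mu(B)$ reduces the problem to controlling $\mu(B^*)/\mu(B) \leq D^2$, which produces the stated $D^5 = D^3 \cdot D^2$. This last step is the main obstacle. When $t \geq 2s/3$ the relation $t+2s \leq 4t$ gives $\mu(B^*) \leq \mu(4B) \leq D^2 \mu(B)$ by two applications of doubling, and the argument concludes. The genuinely delicate case is $t < 2s/3$, where $B$ and $B^*$ are no longer of comparable size; here I would split the Vitali family into ``large'' ($t_i > t$) and ``small'' ($t_i \leq t$) pieces. The small balls are contained in a ball of radius comparable to $t$ around $y$, so the doubling comparison applies directly. For any large ball $B_i$, a fixed dilate of $B_i$ already contains $x$ and has radius bounded by $r+3s$, so the average inequality $\vint_{B_i} |f|^q > \Lambda^q$ transfers to the lower bound $(\M_{q, r+3s} f(x))^q \gtrsim \Lambda^q$, at which point the trivial estimate $\mu(E_{q,s,\Lambda} \cap B) \leq \mu(B)$ is already enough to recover the claim.
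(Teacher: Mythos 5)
Your proposal is correct and follows essentially the same route as the paper: a Vitali-type covering, a disjoint subfamily, and a case analysis hinging on whether a selected ball is ``large'' relative to the test ball $B$, with the crucial observation that the presence of a large ball makes the estimate trivial because the average inequality on that ball already forces $\M_{q,r+3s}f(x)$ to be comparable to $\Lambda$. The paper organizes the case split more economically --- it first asks whether some $r_{B_z}>t$, in which case a single inflated ball $3B_z$ settles the estimate with a factor $D^2$, and otherwise all $r_{B_z}\le t$ so the selected balls sit inside $B''\subset 3B$ --- whereas your outer case on $t\ge 2s/3$ versus $t<2s/3$, with the large/small subdivision only in the second case, reaches the same conclusion and the same constant $D^5$.
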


\begin{proof}
Fix $x\in X$
and $0<t<r$. 
Let $B=B(y,t)$ be a ball in $X$ such that $x\in B$.
It suffices to prove that 
\begin{equation}\label{e.M_single_ball}
\vint_{B} \mathbf{1}_{E_{q,s,\Lambda}} d\mu
\le \frac{D^5(\mathcal{M}_{q,{r+3s}}f(x))^q}{\Lambda^q}\,.
\end{equation}
The proof of \eqref{e.M_single_ball} is based upon a covering argument.
For each $z\in E_{q,s,\Lambda}\cap B$ we fix
a ball $B_z$ of radius $0<r_{B_z}<s$ such that $z\in B_z$  and
\begin{equation}\label{e.sel}
\bigg(\vint_{B_z}   \lvert f\rvert^q \,d\mu\bigg)^{\frac{1}{q}} >\Lambda\,.
\end{equation}
Suppose that $t<r_{B_z}$ for some 
$z\in E_{q,s,\Lambda}\cap B$. Then
$x\in 3B_z$ and, therefore,
\[
\vint_{B} \mathbf{1}_{E_{q,s,\Lambda}} d\mu
\le 1< \frac{\vint_{B_z}   \lvert f\rvert^q \,d\mu}{\Lambda^q}
\le  \frac{D^2\vint_{3B_z}   \lvert f\rvert^q \,d\mu}{\Lambda^q}
\le \frac{D^2(\mathcal{M}_{q,3s} f(x))^q}{\Lambda^q}\,.
\]
Since  $\mathcal{M}_{q,3s} f(x)\le \mathcal{M}_{q,{r+3s}} f(x)$ and   $D>1$,
we thus obtain inequality \eqref{e.M_single_ball}. Hence in the sequel, we can assume that
$r_{B_z}\le t$ for all $z\in E_{q,s,\Lambda}\cap B$.

By   using   the $5r$-covering lemma \cite[Lemma 1.7]{MR2867756}, we obtain a   countable and   disjoint
family $\mathcal{B}\subset \{B_z \mid z\in E_{q,s,\Lambda}\cap B\}$ of balls such that $E_{q,s,\Lambda}\cap B\subset \bigcup_{B'\in\mathcal{B}} 5B'$.
Hence, by \eqref{e.sel}, 
\begin{align*}
\vint_{B} \mathbf{1}_{E_{q,s,\Lambda}} d\mu
&\le \frac{1}{\mu(B)}\sum_{B'\in\mathcal{B}} \mu(5B')\\
&\le \frac{D^3}{\mu(B)}\sum_{B'\in\mathcal{B}} \mu(B')\\
&\le \frac{D^3}{\Lambda^q \mu(B)}\sum_{B'\in\mathcal{B}}
\int_{B'}  \lvert f\rvert^q \,d\mu\,.
\end{align*}
Since $r_{B'}\le \min\{s,t\}$, we have that $B'\subset B''\defeq B(y, t + 2\min\{s,t\})$ for every $B'\in\mathcal{B}$. Also, $B \subset B'' \subset 3B$, so $\mu(B'') \leq D^2 \mu(B)$.
We can conclude that
\begin{align*}
\vint_{B} \mathbf{1}_{E_{q,s,\Lambda}} d\mu
&\le \frac{D^5}{\Lambda^q}\vint_{B''}   \lvert f\rvert^q \,d\mu\le \frac{D^5(\mathcal{M}_{q,t + 3s}   f (x))^q}{\Lambda^q}\,.
\end{align*}
Since $\mathcal{M}_{q,t+3s} f(x)\le \mathcal{M}_{q,r+3s} f(x)$,
we thus obtain inequality \eqref{e.M_single_ball} also in this case.  
\end{proof} 

 The following approximation lemma is originally from \cite[Lemma 3.7]{SEB}. For the convenience of
the reader, we provide a  proof.
We remark that the regularity of the measure is needed in the proof.
 A Borel function $g\colon X\to [0,\infty)$ is {\em simple}, if
$g=\sum_{j=1}^k a_j\mathbf{1}_{E_j}$ for
some real  $a_j>0$ and Borel sets $E_j\subset X$, $j=1,\ldots,k$.

\begin{lemma}\label{lem:approx}   
Let $1\le p<\infty$.
Let $g\colon X\to [0,\infty)$ be a simple Borel function. Then, for each $x\in X$ and every $\varepsilon>0$, there exists   a non-negative and bounded $g_{x,\varepsilon}\in LC(X)$ such that
  $g(y)\le g_{x,\varepsilon}(y)$ for all  $y\in X\setminus \{x\}$  and
$\M_{p,  r }g_{x,\varepsilon}(x) \leq \M_{p,  r }g(x)+\varepsilon$ if $r>0$.  
\end{lemma}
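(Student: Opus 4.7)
The plan is to write $g = \sum_{j=1}^k a_j \mathbf{1}_{E_j}$ with $a_j > 0$ and Borel $E_j$, set $M := \sum_{j=1}^k a_j$, and take $g_{x,\varepsilon} := \sum_{j=1}^k a_j \mathbf{1}_{V_j}$ for suitable open sets $V_j \subset X \setminus \{x\}$ satisfying $V_j \supset E_j \setminus \{x\}$. Because a finite sum of indicators of open sets is lower semicontinuous, $g_{x,\varepsilon}$ is automatically in $LC(X)$ and bounded by $M$, and the required domination $g(y) \le g_{x,\varepsilon}(y)$ for $y \neq x$ is immediate. I will deliberately allow $x \notin V_j$ even when $x \in E_j$, since the assertion only requires domination off $\{x\}$; this freedom is what makes the maximal-function bound attainable.

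The core task is to choose the $V_j$'s so that the error set $V_j \setminus E_j$ has uniformly small relative $\mu$-density at $x$ \emph{at every scale}. To this end I will use a dyadic annular construction around $x$: for $n \in \Z$ let $A_n := \{y \in X : 2^n \le d(y,x) < 2^{n+1}\}$ and let $\tilde A_n := \{y \in X : 2^{n-1} < d(y,x) < 2^{n+2}\}$ be its open fattening, so that each point of $X \setminus \{x\}$ lies in at most three of the $\tilde A_n$. Using outer regularity of $\mu$ on the Borel sets $E_j \cap A_n$, for each index $n$ pick an open $W_n^{(j)} \supset E_j \cap A_n$ with $\mu(W_n^{(j)} \setminus (E_j \cap A_n)) < \gamma_n$ for a parameter $\gamma_n > 0$ to be fixed below, localize by $U_n^{(j)} := W_n^{(j)} \cap \tilde A_n$, and set $V_j := \bigcup_{n \in \Z} U_n^{(j)}$. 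This $V_j$ is open, avoids $x$, and contains $E_j \setminus \{x\}$.

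The central allocation is $\gamma_n := \eta\bigl(\mu(B(x,2^{n+1})) - \mu(B(x,2^n))\bigr)$, supplemented by a harmless small positive term on any dyadic shell of zero $\mu$-mass. Since $U_n^{(j)} \cap B(x,s)$ can be nonempty only for indices with $2^{n-1} < s$, telescoping together with doubling gives
\[
\mu\bigl(B(x,s) \cap (V_j \setminus E_j)\bigr) \le \eta\,\mu(B(x,4s)) \le \eta D^2 \,\mu(B(x,s))
\]
for every $s > 0$. For a general ball $B = B(y,t)$ containing $x$, the inclusion $B \subset B(x,2t)$ together with the further doubling bound $\mu(B(x,2t)) \le D^2\mu(B)$ upgrades this to $\mu(B \cap (V_j \setminus E_j)) \le \eta D^4 \mu(B)$. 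Minkowski's inequality in $L^p(B, d\mu/\mu(B))$ applied to the pointwise bound $g_{x,\varepsilon} \le g + \sum_j a_j \mathbf{1}_{V_j \setminus E_j}$ then yields
\[
\Bigl(\vint_B g_{x,\varepsilon}^p\,d\mu\Bigr)^{1/p} \le \Bigl(\vint_B g^p\,d\mu\Bigr)^{1/p} + M(\eta D^4)^{1/p}.
\]
Choosing $\eta$ so that $M(\eta D^4)^{1/p} \le \varepsilon$ and taking the supremum over admissible balls will complete the proof. The main obstacle is the scale-uniformity of the density control: making $\mu(V_j \setminus E_j)$ small in total is insufficient, because arbitrarily small balls around $x$ can still see it densely, and it is precisely the scale-adapted allocation $\gamma_n$ that allows the telescoping trick to yield a constant independent of the ball's radius.
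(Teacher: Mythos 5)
Your proof is correct and takes essentially the same route as the paper: an annular decomposition around $x$, outer regularity applied shell by shell, a scale-adapted allocation of the error mass, and telescoping plus doubling to make the density bound uniform over all admissible balls. The only differences are cosmetic bookkeeping — you use half-open partition annuli with an open fattening and allocate $\gamma_n$ via ball increments, then combine all $E_j$ at once with Minkowski's inequality, whereas the paper uses overlapping open annuli $\{2^{m-1}<d(x,y)<2^{m+1}\}$, allocates via $\mu(A_m)$, proves the statement for a single Borel set $E$ first, and then combines via subadditivity of $\M_{p,r}$ — and your aside about zero-mass shells is moot, since quasiconvexity (hence connectedness) forces every nonempty open annulus to have positive measure, as the paper notes.
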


\begin{proof} 
We prove the claim, while assuming that $\diam(X)=\infty$. The case $\diam(X)<\infty$ is similar, and we omit the modifications. Fix $x\in X$ and  $\varepsilon>0$.
In the first step, we prove an auxiliary statement for  a Borel set  $E\subset X$.
Namely,  we will show that
there exists an open set $U\subset X$ such that $\mathbf{1}_E\le \mathbf{1}_U$ in $X\setminus \{x\}$ and 
\begin{equation}\label{e.verysimple}
\M_{p,  r }(\mathbf{1}_U-\mathbf{1}_E)(x)<\varepsilon\,,\qquad \text{ if } r>0\,.
\end{equation}
To prove this auxiliary statement, for each $m\in \Z$, we write 
\[A_m=\{y\in X\,:\, 2^{m-1}<d(x,y)<2^{m+1}\}\,.\]
Observe that each $y\in X$ belongs to at most two  annuli.
We also have that $\mu(A_m)>0$, since $X$ is  connected and unbounded. 
Hence, if $m\in\Z$ then by regularity of the measure $\mu$, there is an  open set $U_m\subset A_m$
such that
\begin{equation}\label{e.measures}
A_m\cap E\subset U_m\qquad \text{\ and }\ \qquad \mu(U_m\setminus E)=\mu(U_m\setminus (A_m\cap E))<  \frac{\varepsilon^{p}}{2D^4} \mu(A_m)\,.
\end{equation}
Define $U=\bigcup_{m\in\Z} U_m$. Then
\begin{equation}\label{e.inclusion}
E\setminus \{x\}\subset \bigcup_{m\in\Z} (A_m\cap E)\subset \bigcup_{m\in\Z} U_m=U\,.
\end{equation}
As a consequence, we then have  $\mathbf{1}_E(y)\le \mathbf{1}_U(y)$ for every $y\in X\setminus \{x\}$. 
To prove \eqref{e.verysimple}, we let $r>0$ and let $B(y,t)\subset X$ be a ball in $X$ such
that $x\in B(y,t)$ and $0<t<r$.  Then
$\mathbf{1}_U-\mathbf{1}_E=\mathbf{1}_{U\setminus E}$ almost everywhere, and therefore
by \eqref{e.measures} we get
\begin{align*}
\vint_{B(y,t)} \lvert \mathbf{1}_U-\mathbf{1}_E\rvert^p\,d\mu&=\vint_{B(y,t)} \mathbf{1}_{U\setminus E}\,d\mu\\
&\le \frac{1}{\mu(B(y,t))} \int_X\sum_{m=-\infty}^{\lceil \log_2 (2t)\rceil} \mathbf{1}_{U_m\setminus E}\,d\mu\\
&= \frac{\varepsilon^{p}}{2D^4\mu(B(y,t))}\sum_{m=-\infty}^{\lceil \log_2 (2t)\rceil} \mu(A_m)\\
&\le \frac{\varepsilon^{p}}{D^4}\frac{\mu(B(x,8t))}{\mu(B(y,t))} \le\varepsilon^{p}\frac{\mu(B(x,t))}{\mu(B(y,2t))}\le \varepsilon^{p}\,.
\end{align*}
By raising this estimate to power $\frac{1}{p}$ and then taking
supremum over all balls $B(y,t)$ as above, we obtain 
inequality \eqref{e.verysimple}.

We now turn to the proof of the actual lemma. Let $g=\sum_{j=1}^k a_j\mathbf{1}_{E_j}$
be such that $a_j>0$ and  $E_j\subset X$ is a Borel set for each $j=1,\ldots,k$.
By the auxiliary statement,  for each $j=1,\ldots,k$, there exists a non-negative and bounded 
$g_{x,\varepsilon,j}\in  LC(X)$ such that $\mathbf{1}_{E_j}\le g_{x,\varepsilon,j}$ in $X\setminus \{x\}$
and 
\begin{equation}\label{e.error}
\M_{p,  r }(g_{x,\varepsilon,j}-\mathbf{1}_{E_j})(x) \le\frac{\varepsilon}{k\max_j a_j }\,.
\end{equation}
Now we define $g_{x,\varepsilon}=\sum_{j=1}^k a_jg_{x,\varepsilon,j}$.
Then $g\le g_{x,\varepsilon}$ in $X\setminus \{x\}$. Moreover,  by the subadditivity and positive homogeneity of the maximal function,
and inequalities \eqref{e.error}, we have
\begin{align*}
\M_{p,r}g_{x,\varepsilon}(x)&=\M_{p,r}(g+g_{x,\varepsilon}-g)(x)\\&\le \M_{p,r}g(x)
+\M_{p,r}(g_{x,\varepsilon}-g)(x)\\&\le \M_{p,r}g(x)
 + \sum_{j=1}^k a_j\M_{p,r}(g_{x,\varepsilon,j}-\mathbf{1}_{E_j})(x)\\&\le \M_{p,r}g(x)+\varepsilon\,.
\end{align*}
This concludes the proof.
 \end{proof}

\section{Characterization  by curves}\label{s.char}

  We   translate the   pointwise    $p$-Hardy inequality to an equivalent problem of accessibility.   This problem   can be phrased as a problem of finding a single curve with a small integral. 
The standing assumptions concerning the space $X$ are stated in
Section \ref{s.notation}.

\begin{lemma}\label{lem:HardyChar} 
  Let   $1\le p<\infty$.
Then an open set $\emptyset\not=\Omega \subsetneq X$  satisfies a pointwise $p$-Hardy inequality if, and only if, there are constants   $C_\Gamma>0$, $  \nu >C_{\mathrm{QC}}$ and $\kappa\ge 1 $ such that for   each   non-negative and bounded $g \in   LC(X) $  and   every   $x \in \Omega$, we have
 \begin{equation}\label{eq:HardyChar} 
\inf_{\gamma \in \Gamma(X)^\nu_{x,\Omega^c}} \int_{\gamma} g \,ds \leq C_\Gamma \,d(x,\Omega^c) \left( \M_{p,\kappa d(x,\Omega^c)} g (x) \right).
\end{equation}
\end{lemma}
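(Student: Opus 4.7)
The lemma splits into two implications, which I would handle separately.

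\emph{Curve estimate implies pointwise Hardy.} Suppose \eqref{eq:HardyChar} holds. Given $u\in\Lip_0(\Omega)$ with bounded upper gradient $g$ and $x\in\Omega$, the plan is to apply the curve hypothesis to $g$ and observe that, for any curve $\gamma$ from $x$ to a point of $\Omega^c$, the upper gradient inequality together with $u\equiv 0$ on $\Omega^c$ yields $|u(x)|\le \int_\gamma g\,ds$. The obstacle is that \eqref{eq:HardyChar} requires a lower semicontinuous integrand, while a bounded upper gradient is only Borel. I would bridge this by approximation. Set $M:=\|g\|_\infty$ and, for each $n\in\N$, define the simple function $g_n(y) := (M/2^n)\lceil 2^n g(y)/M\rceil$ so that $g\le g_n\le g+M/2^n$ pointwise. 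Apply Lemma~\ref{lem:approx} to $g_n$ at the point $x$ to obtain a bounded $\tilde g_n \in LC(X)$ with $\tilde g_n\ge g_n\ge g$ on $X\setminus\{x\}$ and $\M_{p,\kappa d(x,\Omega^c)}\tilde g_n(x)\le \M_{p,\kappa d(x,\Omega^c)}g(x) + M/2^n + 1/n$. The curve hypothesis applied to $\tilde g_n$, combined with the pointwise inequality $\int_\gamma g\,ds \le \int_\gamma \tilde g_n\,ds$ (valid up to the measure-zero point $x$), then lets me pass to the infimum over $\gamma\in\Gamma(X)^\nu_{x,\Omega^c}$ and to the limit $n\to\infty$.

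\emph{Pointwise Hardy implies curve estimate.} Fix $x_0\in\Omega$, write $r_0 := d(x_0,\Omega^c)$, and let $g\in LC(X)$ be bounded and non-negative. For a parameter $\epsilon>0$ I would introduce the test function
\[
u_\epsilon(y) := \inf\Bigl\{\int_\gamma(g+\epsilon)\,ds \,:\, \gamma \text{ is a curve connecting } y \text{ to } \Omega^c\Bigr\}.
\]
Using $C_{\mathrm{QC}}$-quasi-convexity of $X$ together with the boundedness of $g+\epsilon$, routine concatenation arguments show that $u_\epsilon$ is Lipschitz on $X$ with constant $C_{\mathrm{QC}}(\|g\|_\infty+\epsilon)$, vanishes on $\Omega^c$, and admits $g+\epsilon$ as an upper gradient. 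Applying Definition~\ref{def:ptwisePHardy} to $u_\epsilon$ then yields
\[
u_\epsilon(x_0) \,\le\, C_{\mathrm H}\, r_0\, \bigl(\M_{p,\kappa r_0} g(x_0) + \epsilon\bigr).
\]

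\emph{Extracting a short curve.} For $\delta>0$ I would pick a near-minimizer $\gamma$ with $\int_\gamma(g+\epsilon)\,ds \le u_\epsilon(x_0) + \delta$; splitting the integrand into $g$ and $\epsilon$ yields the simultaneous bounds
\[
\int_\gamma g\,ds \le C_{\mathrm H} r_0 \M_{p,\kappa r_0} g(x_0) + C_{\mathrm H} r_0 \epsilon + \delta, \qquad \epsilon\len(\gamma) \le C_{\mathrm H} r_0 \M_{p,\kappa r_0} g(x_0) + C_{\mathrm H} r_0 \epsilon + \delta.
\]
When $\M_{p,\kappa r_0} g(x_0) > 0$ I would choose $\epsilon$ proportional to $\M_{p,\kappa r_0} g(x_0)$ to balance the two bounds, and then send $\delta\to 0$; this produces a curve of length at most $\nu r_0$ with integral at most $C_\Gamma r_0 \M_{p,\kappa r_0} g(x_0)$, where $\nu$ and $C_\Gamma$ depend explicitly on $C_{\mathrm H}$ and are enlarged if necessary to guarantee $\nu>C_{\mathrm{QC}}$. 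The degenerate case $\M_{p,\kappa r_0} g(x_0)=0$ follows from the same two inequalities upon letting $\epsilon\to 0$; indeed, lower semicontinuity of $g$ combined with $g=0$ $\mu$-a.e.\ on $B(x_0,\kappa r_0)$ forces $g\equiv 0$ pointwise on that open ball, so any sufficiently short quasi-geodesic from $x_0$ to a point of $\Omega^c$ (which stays inside $B(x_0,\kappa r_0)$ provided $\kappa>C_{\mathrm{QC}}$) has vanishing $g$-integral. The main obstacle I anticipate is precisely this balancing of $\epsilon$ and $\delta$ to secure length control with $\nu>C_{\mathrm{QC}}$ together with a clean integral bound, while the Lipschitz/upper-gradient properties of $u_\epsilon$ are standard once quasi-convexity is invoked.
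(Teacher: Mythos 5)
Your proposal is correct and follows essentially the same route as the paper's proof: the converse direction is established via a distance-like test function built from a bumped-up integrand, and the forward direction uses the approximation Lemma~\ref{lem:approx} to replace a simple Borel approximant of the upper gradient by a lower semicontinuous majorant with controlled maximal function at $x$. The only noteworthy difference is that the paper takes the bump to be $h=g+\M_{p,\kappa d(x,\Omega^c)}g(x)+\delta$ rather than your $g+\epsilon$, which keeps the added constant strictly positive and yields the length bound $\len(\gamma)\le 4C_{\mathrm H}\,d(x,\Omega^c)$ in a single stroke, thereby avoiding the separate treatment of the degenerate case $\M_{p,\kappa d(x,\Omega^c)}g(x)=0$ that your $\epsilon$-balancing requires.
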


\begin{proof}
Throughout this proof, we tacitly assume that
  curves   are parametrized by   arc   length. 
First suppose that   an open set $\emptyset \not=\Omega\subsetneq X$   satisfies a pointwise $p$-Hardy inequality \eqref{e.pointwise} with constants
$C_{\mathrm{H}}>0$ and $\kappa_\Gamma>1$.  
  Fix a  non-negative and bounded function  $g\in LC(X)$.   
Fix   $x\in \Omega$   and let $\delta>0$. 

Define    a function  $u\colon X\to [0,\infty)$ by setting  
\begin{equation}\label{e.u_def}
  u(y)   = \inf_{\gamma} \int_\gamma h \,ds\,,\qquad   y\in X\,,  
\end{equation}
where 
$h = g + \M_{p,  \kappa_\Gamma   d(x,\Omega^c)} g (x)+\delta$,   which is a non-negative bounded Borel function,  
and the infimum is taken over all curves
$\gamma$ in $X$   connecting $y$ to $\Omega^c$.   Let us
remark that these curves are not
subject to any distance conditions. 
Clearly, we have that
$u=0$ in $\Omega^c$.
  Fix $y,w\in X$   and consider any curve $\sigma$   connecting $y$ to $w$.  
We claim that
\begin{equation}\label{e.des}
\lvert u(  y )-u(  w )\rvert\le \int_\sigma h\,ds\,.
\end{equation}
From this it follows, in particular, that $h$ is an upper gradient of $u$. Moreover, since $X$ is quasiconvex and $h$ is bounded, it follows from \eqref{e.des} that $u\in \Lip_0(\Omega)$.

In order to prove  \eqref{e.des}, we may
assume that $u(y)>u(w)$. Fix $\eps>0$ and let $\gamma$   be a curve in $X$ such that
connects $w$ to $\Omega^c$ and   satisfies inequality  
\[
u(w)\ge \int_\gamma h\,ds - \eps\,.
\]
Let $\sigma \gamma$ be the concatenation of $\sigma$ and $\gamma$. Then
\begin{align*}
\lvert u(y)-u(w)\rvert &=u(y)-u(w)\\
&\le \int_{ \sigma\gamma } h\,ds - \int_\gamma h\,ds + \eps=\int_{\sigma} h\,ds+\eps\,.
\end{align*}
The desired inequality \eqref{e.des}
follows by taking $\eps\to 0_+$.

Now, applying the assumed pointwise $p$-Hardy inequality \eqref{def:ptwisePHardy}
to the function $u$ and to its   bounded    upper gradient $h$ yields
\[
u(x)
\le C_{\mathrm{H}}\,d(x,\Omega^c)(\mathcal{M}_{p,\kappa_\Gamma d(x,\Omega^c)} h(x))<\infty\,.
\]
Since $u(x)\ge \delta d(x,\Omega^c)>0$, by \eqref{e.u_def} there is a curve $\gamma$ in $X$ 
  connecting $x$ to $\Omega^c$ such that  
\begin{equation}\label{e.strong}
\begin{split}
\int_\gamma g\,ds + (\mathcal{M}_{p,  \kappa_\Gamma   d(x,\Omega^c)} g(x)+\delta)\len(\gamma)
 &= \int_\gamma h\,ds   \le 2u(x)\\
&\le 2C_{\mathrm{H}}\,d(x,\Omega^c)(\mathcal{M}_{p,  \kappa_\Gamma  d(x,\Omega^c)} h(x))
\\&\le 2C_{\mathrm{H}}\,d(x,\Omega^c)(2\mathcal{M}_{p,  \kappa_\Gamma  d(x,\Omega^c)} g(x)+\delta)\,.
\end{split}
\end{equation}
  The last inequality  follows from the sublinearity of maximal function. 
We can now   conclude from \eqref{e.strong} that
$\len(\gamma)\le 4C_{\mathrm{H}}\,d(x,\Omega^c)$.
By taking $\delta\to 0_+$, we obtain from \eqref{e.strong} that
\[
\int_\gamma g\,ds\le 
4C_{\mathrm{H}}\,d(x,\Omega^c)(\mathcal{M}_{p,  \kappa_\Gamma   d(x,\Omega^c)} g(x))\,.
\]
Thus, inequality
\eqref{eq:HardyChar} holds   with 
\[C_\Gamma=4C_{\mathrm{H}}\,,\qquad \kappa=\kappa_\Gamma\,,\qquad \nu>\max\{C_{\mathrm{QC}},4C_{\mathrm{H}}\}\,.\]

  For the converse implication,   we assume that inequality \eqref{eq:HardyChar} holds,   for all    non-negative and bounded    $g\in LC(X)$, and for all $x\in\Omega$. 
We need to prove that $\Omega$ satisfies a pointwise $p$-Hardy inequality.
To this end, we let $u\in \Lip_0(\Omega)$ and let $g$
be a bounded  upper gradient of $u$.
We also fix $x\in \Omega$. 
Since $g$ is not necessarily lower semicontinuous, some approximation is first needed so that we can get to apply \eqref{eq:HardyChar} and thereby establish  inequality \eqref{e.pointwise}.

Let
 $(g_N)_{N\in\N}$ be a pointwisely increasing sequence of non-negative simple Borel functions such that $\lim_{N\to\infty} g_N=g$ uniformly in $X$. Fix $\varepsilon>0$. 
 By the uniform convergence, there exists $N\in\N$ such that
 for all $\gamma\in \Gamma(X)^\nu_{x,\Omega^c}$ we have
\begin{equation}\label{e.simple}
\begin{split}
 \int_{\gamma} g \,ds &=\, \int_{\gamma} g_N \,ds + \int_{\gamma} (g-g_N) \,ds \\
 &\le 
 \int_{\gamma} g_N \,ds + \sup_{y\in X}(g(y)-g_N(y))\len(\gamma)\\
  &\le 
 \int_{\gamma} g_N \,ds + \sup_{y\in X}(g(y)-g_N(y))\nu d(x,\Omega^c)\\
 &\le  \int_{\gamma} g_N \,ds+\varepsilon\,.
 \end{split}
\end{equation}
Let $g_{N,x,\varepsilon}\in LC(X)$ be 
the non-negative bounded  approximant of $g_N$ given by Lemma \ref{lem:approx}.
By 
inequality \eqref{eq:HardyChar} and Lemma \ref{lem:approx},
there exists $\gamma_{N}\in \Gamma(X)^\nu_{x,\Omega^c}$ such that
\begin{align*}
\int_{\gamma_{N}} g_{N,x,\varepsilon} \,ds&\leq C_\Gamma d(x,\Omega^c) \left( \M_{p,\kappa d(x,\Omega^c)} g_{N,x,\varepsilon} (x) \right)+\varepsilon \\&\leq \ C_\Gamma d(x,\Omega^c) \left( \M_{p,\kappa d(x,\Omega^c)} g_{N} (x)+\varepsilon \right)+\varepsilon\\&\leq C_\Gamma d(x,\Omega^c) \left( \M_{p,\kappa d(x,\Omega^c)} g (x)+\varepsilon \right)+\varepsilon\,.
\end{align*}
 
  Without loss of generality, we may assume that $\gamma_N(t)=x$ only if $t=0$. 
On the other hand, by Lemma \ref{lem:approx}, we have  $g_{N}\le g_{N,x,\varepsilon}$ in $X\setminus \{x\}$. Inequality \eqref{e.simple}, with $\gamma=\gamma_N$, implies that
\begin{align*}
 \int_{\gamma_N} g \,ds &\leq   \int_{\gamma_N} g_N \,ds + \varepsilon\\
 &\le  \int_{\gamma_N} g_{N,x,\varepsilon} \,ds + \varepsilon\\
&\le  C_\Gamma d(x,\Omega^c) \left( \M_{p,\kappa d(x,\Omega^c)} g (x)+\varepsilon \right)+2\varepsilon\,. 
\end{align*}
Since $g$ is an upper gradient of $u\in \Lip_0(\Omega)$, we get
\begin{align*}
 \lvert u(x)\rvert 
 &=\lvert u(\gamma_N(0))-u(\gamma_{N}(\len(\gamma_{N})))\rvert\\&
 \leq \int_{\gamma_N} g \,ds\le C_\Gamma d(x,\Omega^c) \left( \M_{p,\kappa d(x,\Omega^c)} g (x)+\varepsilon \right)+2\varepsilon\,,
\end{align*} 
 and   letting $\varepsilon \to 0$ gives the pointwise $p$-Hardy inequality \eqref{e.pointwise}
   with $C_{\mathrm{H}}=C_\Gamma$ and $\kappa\ge 1$.  
\end{proof}

While seemingly technical, the task of infimizing 
  in \eqref{eq:HardyChar}   is often reduced to constructing an explicit curve,  for which the upper  bound holds.  
In particular, our proof for  self-improvement of pointwise Hardy inequalities is based on establishing the existence of such a single curve for some exponent $q<p$.

Next we define  a convenient $\alpha$-function that condenses the pointwise $p$-Hardy inequality, or  inequality \eqref{eq:HardyChar} to be more specific,  in a single function at the expense of abstraction. 
Indeed, the following definition looks complicated at first sight, but for our purposes the quantity 
$\alpha_{p,\Omega}$ is precisely the correct
way to express the pointwise $p$-Hardy inequality.

\begin{definition}
Let $\emptyset\not=\Omega\subsetneq X$ be an open set.
If   $\tau\ge 0$,   $\kappa,p\ge 1$ and $x\in \Omega$, we write
\[\mathcal{E}^{\kappa,\tau}_{p,x,\Omega} = \{g \in   LC(X)   \mid  \M_{p,  \kappa   d(x,\Omega^c)} g (x) \leq \tau \text{ and }g(y) \in [0,1]\text{ for all }y\in X\}\,.\]
If also   $\nu>C_{\mathrm{QC}}$,   then we write   \begin{equation}\label{eq:alphaDef}
\alpha_{p,\Omega}(  \nu,\kappa,\tau) \defeq \sup_{x \in \Omega} \sup_{g \in \mathcal{E}^{\kappa,\tau}_{p,x,\Omega}} \frac{ \inf_{\gamma \in \Gamma(X)^\nu_{x,\Omega^c}}\int_\gamma g \,ds}{d(x,\Omega^c)}\,. 
\end{equation}
\end{definition}

Concerning definition \eqref{eq:alphaDef}, the parameter   $\nu$   is related to the maximum length of the curves $\gamma$ that are used so that $\len(\gamma)\le \nu d(x,\Omega^c)$.   The parameters   $\kappa$ and $\tau$ measure the non-locality and size of  the maximal function $\M_{p,\kappa d(x,\partial \Omega)} g (x)$, respectively.
%

  The 
fundamental connection between inequality \eqref{eq:HardyChar}
and the $\alpha$-function is established in the following lemma.   

\begin{lemma} \label{lem:rewrite}
  Let $\emptyset\not=\Omega\subsetneq X$ be an open set, and let $\kappa,p\ge 1$ and $\nu>C_{\mathrm{QC}}$.  
Let   $g\in LC(X)$   be such that $g(y)\in [0,1]$ for
every $y\in \Omega$.
Then,
for every $x\in \Omega$, we have 
 \begin{equation}
\inf_{\gamma \in \Gamma(X)^\nu_{x,\Omega^c}} \int_{\gamma} g \,ds \leq d(x,\Omega^c) \alpha_{p,\Omega}\left(  \nu,\kappa,\left( \M_{p,\kappa d(x,\Omega^c)} g (x) \right) \right). 
\end{equation}
\end{lemma}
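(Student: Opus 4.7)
The plan is to reduce the inequality to the definition of $\alpha_{p,\Omega}$ by producing a surrogate function $\tilde g$ that lies in the admissible class $\mathcal{E}^{\kappa,\tau}_{p,x,\Omega}$ with $\tau=\M_{p,\kappa d(x,\Omega^c)} g(x)$, and then transferring the resulting bound back to $g$ by truncating curves. Since the hypothesis only gives $g\in[0,1]$ on $\Omega$ and no control on $\Omega^c$, I set
\[
\tilde g(y)\defeq \min\bigl(\max(g(y),0),\,1\bigr)\qquad (y\in X).
\]
Elementary semicontinuity arguments give $\tilde g\in LC(X)$; by construction $\tilde g(y)\in[0,1]$ for every $y\in X$ and $\tilde g=g$ on $\Omega$. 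The pointwise inequality $\tilde g\le \lvert g\rvert$ then yields $\M_{p,r}\tilde g(x)\le \M_{p,r}g(x)$ for every $r\ge 0$, so $\tilde g\in \mathcal{E}^{\kappa,\tau}_{p,x,\Omega}$ with $\tau=\M_{p,\kappa d(x,\Omega^c)}g(x)$. The definition \eqref{eq:alphaDef} of $\alpha_{p,\Omega}$ then immediately delivers
\[
\inf_{\gamma \in \Gamma(X)^\nu_{x,\Omega^c}} \int_{\gamma} \tilde g \,ds \leq d(x,\Omega^c)\,\alpha_{p,\Omega}\bigl(\nu,\kappa,\M_{p,\kappa d(x,\Omega^c)}g(x)\bigr).
\]

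The remaining task is to pass from $\tilde g$ back to $g$, and the only place where the two differ is $\Omega^c$. I handle this by truncating each admissible curve at its first entry to $\Omega^c$. Given $\gamma\in\Gamma(X)^\nu_{x,\Omega^c}$ parametrized on $[a,b]$, let
\[
t^{*}\defeq \inf\{t\in[a,b]\,:\,\gamma(t)\in\Omega^c\}.
\]
Since $\Omega^c$ is closed and $\gamma$ is continuous with $\gamma(a)=x\in\Omega$ and $\gamma(b)\in\Omega^c$, one has $t^*\in(a,b]$ and $\gamma(t^*)\in\Omega^c$. The restriction $\gamma'\defeq \gamma|_{[a,t^{*}]}$ is then a non-constant rectifiable curve connecting $x$ to $\gamma(t^*)\in\Omega^c$, whose length is bounded by $\len(\gamma)\le\nu d(x,\Omega^c)$, so $\gamma'\in \Gamma(X)^\nu_{x,\Omega^c}$. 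On the half-open interval $[a,t^*)$ the curve $\gamma'$ lies in $\Omega$, where $g=\tilde g$; the single endpoint $t^*$ is negligible for the arc-length integral. Hence, using also $\tilde g\ge 0$,
\[
\int_{\gamma'} g\,ds=\int_{\gamma'}\tilde g\,ds\le \int_{\gamma}\tilde g\,ds.
\]

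Taking the infimum over $\gamma\in \Gamma(X)^\nu_{x,\Omega^c}$ on both sides (using $\gamma'\in \Gamma(X)^\nu_{x,\Omega^c}$ on the left) yields
\[
\inf_{\gamma\in \Gamma(X)^\nu_{x,\Omega^c}}\int_\gamma g\,ds\le \inf_{\gamma\in \Gamma(X)^\nu_{x,\Omega^c}}\int_\gamma \tilde g\,ds,
\]
which combined with the displayed bound from the first paragraph gives the desired inequality. I do not foresee any serious obstacle; the mildly delicate step is verifying that the truncated curve $\gamma'$ is a legitimate element of $\Gamma(X)^\nu_{x,\Omega^c}$, which is immediate from the continuity of $\gamma$, the closedness of $\Omega^c$, and the monotonicity of length under restriction.
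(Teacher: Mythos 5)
Your proof is correct, and it is in fact slightly more careful than the paper's own. The paper's proof is essentially a one-liner that begins ``Fix $g\in LC(X)$ such that $g(y)\in [0,1]$ for all $y\in X$'' --- silently strengthening the hypothesis of the lemma (which only requires $g\in[0,1]$ on $\Omega$) to the condition that the definition of $\mathcal{E}^{\kappa,\tau}_{p,x,\Omega}$ actually demands, namely $g\in[0,1]$ on all of $X$. You noticed this gap: since nothing in the statement controls $g$ on $\Omega^c$, the given $g$ need not lie in $\mathcal{E}^{\kappa,\tau}_{p,x,\Omega}$. You repair it by passing to the truncation $\tilde g=\min(\max(g,0),1)\in LC(X)$, verifying $\M_{p,r}\tilde g(x)\le\M_{p,r}g(x)$ via $0\le\tilde g\le |g|$, applying the definition of $\alpha_{p,\Omega}$ to $\tilde g\in\mathcal{E}^{\kappa,\tau}_{p,x,\Omega}$, and then transferring the bound back to $g$ by cutting each competitor curve at its first hitting time of $\Omega^c$ --- a point where $\gamma'\in\Gamma(X)^\nu_{x,\Omega^c}$ still holds, $g=\tilde g$ along $\gamma'$ up to a single negligible endpoint, and $\tilde g\ge 0$ makes the integral monotone under truncation. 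The core of both proofs --- recognizing that the infimum in question is dominated by the supremum defining $\alpha_{p,\Omega}$ --- is identical; your extra layer of bookkeeping is what makes the argument valid under the hypothesis exactly as stated. In the one place the paper applies this lemma (the proof of Lemma~\ref{lem:alphaClass}) the stronger condition $g\in[0,1]$ on all of $X$ is arranged beforehand, so the discrepancy is harmless there, but your version is the one that actually proves the lemma as written.
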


\begin{proof}
Fix   $g\in LC(X)$   such that $g(y)\in [0,1]$ for all $y\in X$. 
Let $x\in\Omega$ and write 
\[\tau=\M_{p,\kappa d(x,\Omega^c)} g (x)   \ge 0 \,.\]
Then $g\in \mathcal{E}^{\kappa,\tau}_{p,x,\Omega}$, and hence
\[
\frac{\inf_{\gamma \in \Gamma(X)^\nu_{x,\Omega^c}} \int_{\gamma} g \,ds}{d(x,\Omega^c)}\le
\sup_{h \in \mathcal{E}^{\kappa,\tau}_{p,x,\Omega}} \frac{ \inf_{\gamma \in \Gamma(X)^\nu_{x,\Omega^c}}\int_\gamma h \,ds}{d(x, \Omega^c)}
\le \alpha_{p,\Omega}(  \nu,\kappa,\tau)
\]
Where the last step follows, since $x\in \Omega$.  
\end{proof}

In particular, from Lemma \ref{lem:rewrite} we now obtain the following sufficient condition for the pointwise
$p$-Hardy inequality in terms of a $\tau$-linear upped bound for the $\alpha$-function.

\begin{lemma}\label{lem:alphaClass}
Let $1\le p<\infty$ and let $\emptyset\not=\Omega\subsetneq X$ be an open set.
Suppose that there
are constants $  \nu >C_{\mathrm{QC}}$, $\kappa\ge 1$   and $C_\alpha >0$   such that,
for any   $\tau\ge 0$,   we have
\[
\alpha_{p,\Omega}(  \nu,\kappa,\tau)\le C_\alpha \tau\,.
\]
Then   $\Omega$   satisfies a pointwise $p$-Hardy inequality.
\end{lemma}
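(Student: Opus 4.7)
The plan is to reduce the lemma to the curve characterization provided by Lemma \ref{lem:HardyChar} and then invoke the hypothesis after a harmless normalization. By Lemma \ref{lem:HardyChar}, it is enough to show that with the given $\nu$ and $\kappa$ there exists a constant $C_\Gamma>0$ such that, for every non-negative bounded $g\in LC(X)$ and every $x\in\Omega$,
\[
\inf_{\gamma \in \Gamma(X)^\nu_{x,\Omega^c}} \int_{\gamma} g \,ds \le C_\Gamma\, d(x,\Omega^c)\,\M_{p,\kappa d(x,\Omega^c)} g (x).
\]

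To do this, I would fix such a $g$ and $x$, set $M=\sup_{y\in X} g(y)<\infty$, and dispose of the trivial case $M=0$. For $M>0$, pass to the normalized function $\tilde g=g/M\in LC(X)$, which takes values in $[0,1]$ throughout $X$ and is therefore admissible in the class $\mathcal{E}^{\kappa,\tau}_{p,x,\Omega}$ for $\tau=\M_{p,\kappa d(x,\Omega^c)}\tilde g(x)$. Applying Lemma \ref{lem:rewrite} to $\tilde g$ yields
\[
\inf_{\gamma \in \Gamma(X)^\nu_{x,\Omega^c}} \int_{\gamma} \tilde g \,ds \le d(x,\Omega^c)\,\alpha_{p,\Omega}\bigl(\nu,\kappa, \M_{p,\kappa d(x,\Omega^c)} \tilde g(x)\bigr).
\]
Now plug in the hypothesis $\alpha_{p,\Omega}(\nu,\kappa,\tau)\le C_\alpha\tau$ with $\tau=\M_{p,\kappa d(x,\Omega^c)}\tilde g(x)$, and rescale by multiplying through by $M$, using that both the maximal operator and the path integral are positively homogeneous. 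The factor $M$ cancels on the right, leaving the desired curve inequality with $C_\Gamma=C_\alpha$, and Lemma \ref{lem:HardyChar} closes the argument.

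There is no real obstacle here: the whole content is the normalization step. It is forced on us because the class $\mathcal{E}^{\kappa,\tau}_{p,x,\Omega}$ demands $g\in[0,1]$ pointwise on $X$, whereas Lemma \ref{lem:HardyChar} only gives bounded non-negativity. The boundedness assumption on $g$ in Lemma \ref{lem:HardyChar} is exactly what makes the rescaling legitimate, and the fact that $\tilde g\le 1$ on all of $X$ (not merely on $\Omega$) ensures admissibility in $\mathcal{E}^{\kappa,\tau}_{p,x,\Omega}$ as used in the proof of Lemma \ref{lem:rewrite}.
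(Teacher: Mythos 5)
Your proof is correct and follows essentially the same route as the paper's: reduce via Lemma \ref{lem:HardyChar} to the curve inequality, normalize $g$ so that it takes values in $[0,1]$ (using the scale-invariance of the target estimate), and then apply Lemma \ref{lem:rewrite} together with the assumed linear bound on $\alpha_{p,\Omega}$. The only difference is that you spell out the normalization $\tilde g = g/\sup g$ explicitly, whereas the paper simply invokes scale invariance; the content is identical.
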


\begin{proof}
  By Lemma \ref{lem:HardyChar}, it suffices to find
a constant $C_\Gamma>0$ such that  inequality \eqref{eq:HardyChar} holds for each non-negative bounded $g\in LC(X)$ and every $x\in\Omega$ --- the remaining constants $\nu$ and $\kappa$ are given in the assumptions of the present lemma.
Fix such a function $g$ and a point $x\in\Omega$. Since
$g$ is bounded and inequality \eqref{eq:HardyChar} is invariant under multiplication
of $g$ with a strictly positive constant, we may further assume that
$g(y)\in [0,1]$ for all $y\in X$.
 
Then the desired estimate \eqref{eq:HardyChar},
with $C_\Gamma=C_\alpha$, follows immediately 
 from Lemma \ref{lem:rewrite} and the assumptions.
\end{proof}

The converse of Lemma \ref{lem:alphaClass} is also true, as we will
see in Section \ref{e.self_improvement}.
Therein the following inequalities for the $\alpha$-function become useful.

\begin{lemma}\label{lem:alphaEst} 
  Let $\emptyset\not=\Omega\subsetneq X$ be an open set.  
  Let  $0\le \tau<\tau'$, $\kappa,p\ge 1$ and $\nu>C_{\mathrm{QC}}$.  
Then
\[
  \alpha_{p,\Omega}(\nu,\kappa,\tau)  \leq \alpha_{p,\Omega}(\nu,\kappa,\tau')\,,\qquad 
\alpha_{p,\Omega}(\nu,\kappa,\tau) \leq \nu\,,
\]
and, for   every   $M\geq 1$, 
\[\alpha_{p,\Omega}(\nu,\kappa,M\tau) \leq M\alpha_{p,\Omega}(\nu,\kappa, \tau)\,.
\]
\end{lemma}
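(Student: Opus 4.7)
The three inequalities are all essentially bookkeeping consequences of the definition \eqref{eq:alphaDef}, so my plan is to take them in order and extract each from a direct manipulation of the class $\mathcal{E}^{\kappa,\tau}_{p,x,\Omega}$ or of the integral $\int_\gamma g\,ds$.

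For the first inequality, I would simply observe that if $0\le \tau<\tau'$ then $\mathcal{E}^{\kappa,\tau}_{p,x,\Omega}\subset\mathcal{E}^{\kappa,\tau'}_{p,x,\Omega}$, because $\M_{p,\kappa d(x,\Omega^c)}g(x)\le\tau$ is a strictly stronger constraint than $\le\tau'$, while the pointwise bound $g(y)\in[0,1]$ is unchanged. Since $\alpha_{p,\Omega}(\nu,\kappa,\cdot)$ is a supremum over this class, enlarging the class can only increase the value; taking the outer supremum over $x\in\Omega$ preserves this.

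For the second inequality, I would fix $x\in\Omega$ and $g\in \mathcal{E}^{\kappa,\tau}_{p,x,\Omega}$. The bound $g\le 1$ gives $\int_\gamma g\,ds\le \len(\gamma)$ for every curve $\gamma$. To obtain a curve in $\Gamma(X)^\nu_{x,\Omega^c}$ at all, I would use $C_{\mathrm{QC}}$-quasiconvexity of $X$ together with $\nu>C_{\mathrm{QC}}$: pick any $y\in\Omega^c$ with $d(x,y)\le (\nu/C_{\mathrm{QC}})d(x,\Omega^c)$ (which exists since $\nu/C_{\mathrm{QC}}>1$ and $d(x,\Omega^c)$ is an infimum) and connect $x$ to $y$ by a curve of length at most $C_{\mathrm{QC}}d(x,y)\le\nu d(x,\Omega^c)$. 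This produces $\gamma\in\Gamma(X)^\nu_{x,\Omega^c}$, so $\inf_\gamma\int_\gamma g\,ds\le\nu d(x,\Omega^c)$, and dividing by $d(x,\Omega^c)$ and taking the two suprema yields $\alpha_{p,\Omega}(\nu,\kappa,\tau)\le\nu$.

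For the third inequality, I would use the homogeneity of the maximal function and the line integral. Given $g\in\mathcal{E}^{\kappa,M\tau}_{p,x,\Omega}$ with $M\ge 1$, set $\tilde g\defeq g/M$. Then $\tilde g(y)\in[0,1/M]\subset[0,1]$ and $\M_{p,\kappa d(x,\Omega^c)}\tilde g(x)=M^{-1}\M_{p,\kappa d(x,\Omega^c)}g(x)\le\tau$, so $\tilde g\in\mathcal{E}^{\kappa,\tau}_{p,x,\Omega}$. Since $\int_\gamma g\,ds=M\int_\gamma\tilde g\,ds$ for every curve, I get
\[
\frac{\inf_{\gamma\in\Gamma(X)^\nu_{x,\Omega^c}}\int_\gamma g\,ds}{d(x,\Omega^c)}=M\,\frac{\inf_{\gamma\in\Gamma(X)^\nu_{x,\Omega^c}}\int_\gamma\tilde g\,ds}{d(x,\Omega^c)}\le M\,\alpha_{p,\Omega}(\nu,\kappa,\tau),
\]
and taking supremum over admissible $g$ and $x$ completes the proof. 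There is no real obstacle here; the only point that requires care is the existence of curves used in the second inequality, and that is exactly what the hypothesis $\nu>C_{\mathrm{QC}}$ is built for.
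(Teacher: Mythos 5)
Your proof is correct and takes essentially the same approach as the paper, which simply asserts the inequalities are clear from the definition and points to the two observations ($g\le 1$ and $\len(\gamma)\le\nu d(x,\Omega^c)$) that you make explicit; your filling in of the quasiconvexity argument for nonemptiness of $\Gamma(X)^\nu_{x,\Omega^c}$ and of the rescaling $\tilde g=g/M$ supplies exactly the routine details the paper leaves to the reader.
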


\begin{proof}
These   inequalities   are clear from the definition \eqref{eq:alphaDef}. In this connection, it is important to observe that  $g$ is bounded by $1$
  and $\len(\gamma)\le \nu d(x,\Omega^c)$.  
\end{proof}

\section{Key theorem for self-improvement}\label{e.self_improvement}

In this section we  formulate and prove our key  Theorem \ref{t.converse_and_impro}.
In the light of Lemma \ref{lem:alphaClass},  Theorem \ref{t.converse_and_impro} implies
self-improvement of pointwise $p$-Hardy inequalities; see
Theorem \ref{c.main}.
This theorem also provides a
converse of Lemma \ref{lem:alphaClass} for  $p>1$;
see Theorem \ref{t.consequences}.

Lemmata \ref{lem:PIchar} and \ref{lem:HardyChar}  give us the proper tools 
for the proof of Theorem~\ref{t.converse_and_impro}.
We assume that $X$  supports a better $p'$-Poincar\'e inequality
for some $p'<p$. This assumption allows us to focus on the new phenomena
that arise especially in connection with the self-improvement of pointwise $p$-Hardy inequalities.

\begin{theorem}\label{t.converse_and_impro}
  Let   $1\le  p'<p<\infty$.   Assume that $X$ supports
a $p'$-Poincar\'e inequality. 
Let $\emptyset\not=\Omega\subsetneq X$ be an open set
that satisfies a pointwise $p$-Hardy inequality.
Then there exists an exponent    $q\in (p',p)$   and constants 
  $  N   >C_{\mathrm{QC}}$, $K\ge 1$ and $  C_\alpha>0 $   such that
\begin{equation}\label{e.main_dest}
\alpha_{q,\Omega}(  N,K, \tau)\le C_\alpha\tau
\end{equation}
whenever  $\tau  \ge   0$. 
\end{theorem}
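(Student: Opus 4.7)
The plan is to verify the hypotheses of Lemma~\ref{lem:alphaClass} with some exponent $q \in (p', p)$ in place of $p$: for suitable constants $N > C_{\mathrm{QC}}$ and $K \ge 1$, I aim to show the linear bound $\alpha_{q, \Omega}(N, K, \tau) \le C_\alpha \tau$ for all $\tau \ge 0$. By Lemma~\ref{lem:HardyChar} the assumed pointwise $p$-Hardy inequality is equivalent to the linear bound $\alpha_{p, \Omega}(\nu_H, \kappa_H, \sigma) \le C_\Gamma \sigma$, which I will feed with indicator functions of level sets of a maximal function. Lemma~\ref{lem:PIchar} provides the corresponding curve-existence statement for the $p'$-Poincar\'e hypothesis between arbitrary pairs of points. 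The main task is to combine these two curve statements into a single curve running from a given $x \in \Omega$ to $\Omega^c$.

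Fix $x \in \Omega$ and $g \in \mathcal{E}^{K, \tau}_{q, x, \Omega}$; by approximation via Lemma~\ref{lem:approx} I may assume $g \in LC(X)$, in which case $g \le \M_{q, s_0} g$ holds pointwise for every $s_0 > 0$. Setting $s_0 = d(x, \Omega^c)$, at each level $\Lambda > 0$ I introduce the open \emph{bad set}
\[
E_\Lambda = \{y \in X : \M_{q, s_0} g(y) > \Lambda\}\,,
\]
on whose complement $g \le \Lambda$. The scale-invariant weak estimate of Lemma~\ref{lem:HLmaxmaxest} gives
\[
\M_{p, r}\mathbf{1}_{E_\Lambda}(x) \le D^{5/p}(\tau/\Lambda)^{q/p}
\]
for any $r$ with $r + 3 s_0 \le K d(x, \Omega^c)$, which is arranged by taking $K$ large enough.

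Applying the curve form of pointwise $p$-Hardy (Lemma~\ref{lem:HardyChar}) to an LSC majorant of $\mathbf{1}_{E_\Lambda}$ supplied by Lemma~\ref{lem:approx} yields a curve $\gamma_0 \in \Gamma(X)^{\nu_H}_{x, \Omega^c}$ spending little time in $E_\Lambda$:
\[
\int_{\gamma_0}\mathbf{1}_{E_\Lambda}\,ds \le C_1\, d(x, \Omega^c) (\tau/\Lambda)^{q/p}\,.
\]
Next I would subdivide $\gamma_0$ into sub-arcs of length at most $s_0/\kappa_{\mathrm{PI}}$, and on each sub-arc whose endpoints both lie in $E_\Lambda^c$ I would substitute a Poincar\'e sub-curve from Lemma~\ref{lem:PIchar}. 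This is legitimate since $p' \le q$ and H\"older's inequality give $\M_{p', s_0} g \le \Lambda$ at such endpoints, so the substituted sub-curve obeys $\int g\,ds \le 2 C_{\mathrm{A}} \Lambda \cdot (\textrm{endpoint distance})$. Concatenating these replacements with the bad sub-arcs of $\gamma_0$ produces a curve $\tilde\gamma$ from $x$ to $\Omega^c$, of length at most a fixed multiple of $\nu_H d(x, \Omega^c)$, with
\[
\int_{\tilde\gamma} g\,ds \le C_2 \Lambda\, d(x, \Omega^c) + I_{\mathrm{bad}}\,,
\]
where $I_{\mathrm{bad}}$ denotes the integral of $g$ over the sub-arcs of $\gamma_0$ intersecting $E_\Lambda$.

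The hard part will be ensuring this total bound is linear in $\tau$. Crudely estimating $g \le 1$ on the bad set leads only to $\alpha_{q, \Omega}(\tau) \lesssim \tau + \tau^{q/(p+q)}$ after optimizing $\Lambda$, which is sublinear at small $\tau$. To upgrade this to a genuinely linear bound, the key is to run a bootstrap argument driven by the quantitative power gain $(\tau/\Lambda)^{q/p}$: choosing $q$ in $(p', p)$ close to $p$ and $\Lambda$ a large constant multiple of $\tau$, the bad-arc contribution $I_{\mathrm{bad}}$ is to be self-improvingly absorbed into the left-hand side, using the a priori finiteness $\alpha_{q, \Omega} \le \nu$ of Lemma~\ref{lem:alphaEst}. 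Careful quantitative tracking of the constants in the absorption, exploiting $q < p$ to ensure the absorbing coefficient is strictly less than one, then yields the linear bound $\alpha_{q, \Omega}(N, K, \tau) \le C_\alpha \tau$ together with the explicit control on $p - q > 0$ advertised in Remark~\ref{r.quantitative}.
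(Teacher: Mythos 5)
Your overall architecture matches the paper's: pass to the curve characterizations (Lemmata \ref{lem:HardyChar} and \ref{lem:PIchar}), build a curve $\gamma_0$ from the $p$-Hardy inequality that spends little length in a superlevel set of $\M_{q,\cdot}g$, replace the bad sub-arcs by Poincar\'e curves between good endpoints, and close with a recursion on $\alpha_{q,\Omega}$. However, there is a genuine gap at exactly the step you flag as ``the hard part'': with a \emph{single} level set $E_\Lambda$, $\Lambda=M\tau$, the absorption cannot be made to work. Lemma \ref{lem:HLmaxmaxest} plus the $p$-Hardy curve bound give $\int_{\gamma_0}\mathbf{1}_{E_\Lambda}\,ds\le C_\Gamma D^{5/p}M^{-q/p}\,d(x,\Omega^c)$, and the only sub-arc that cannot be repaired by a Poincar\'e curve is the \emph{final} one, on which $\gamma_0$ terminates in $\Omega^c$ while still inside $E_\Lambda$; its contribution is controlled by $d_0\,\alpha_{q,\Omega}(N,K,M\tau)$ with $d_0\le C_\Gamma D^{5/p}M^{-q/p}d(x,\Omega^c)$. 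Using the scaling $\alpha_{q,\Omega}(N,K,M\tau)\le M\alpha_{q,\Omega}(N,K,\tau)$, the recursion reads
\[
\alpha_{q,\Omega}(N,K,\tau)\ \le\ S\tau\ +\ C_\Gamma D^{5/p}\,M^{(p-q)/p}\,\alpha_{q,\Omega}(N,K,\tau)\,,
\]
and the absorbing coefficient $C_\Gamma D^{5/p}M^{(p-q)/p}$ is at least $C_\Gamma D^{5/p}$, which is not less than $1$ for a general Hardy constant; taking $q\to p$ or varying $M$ does not help, and the a priori bound $\alpha_{q,\Omega}\le\nu$ only produces an additive constant, not a linear bound in $\tau$. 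So ``careful quantitative tracking'' cannot rescue the one-level scheme.

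The missing idea is the cascade of $k$ nested level sets $E_i=\{\M_{q,\kappa d(x,\Omega^c)}g>M^i\tau\}$, $i=1,\dots,k$, and the averaged test function $h=\tfrac1k\sum_{i=1}^k M^{iq/p}\mathbf{1}_{E_i}$. The $\ell^{p}$-type computation \eqref{e.essential} yields $\M_{p,\kappa d(x,\Omega^c)}h(x)\lesssim k^{-(p-1)/p}$, so by choosing $k$ large one forces $\int_{\gamma_0}h\,ds\le\delta\,d(x,\Omega^c)$ with $\delta=\tfrac14$ \emph{independently of} $C_\Gamma$ and $D$; a pigeonhole then produces a single level $i_0$ with $\int_{\gamma_0}\mathbf{1}_{E_{i_0}}\,ds\le\delta M^{-i_0q/p}d(x,\Omega^c)$, and the recursion coefficient becomes $\delta M^{i_0(p-q)/p}\le\delta M^{k(p-q)/p}<1$ once $p-q<p\ln(1/\delta)/(k\ln M)$. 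Without this (or an equivalent device for manufacturing a small constant in front of the recursive term), your proof does not close. Two smaller points: the Poincar\'e replacement should be applied to the sub-arcs \emph{inside} $E_\Lambda$ (whose endpoints lie in $E_\Lambda^c$), not to sub-arcs with endpoints in $E_\Lambda^c$ generally --- on the good part one simply uses $g\le M^{i_0}\tau$ pointwise from lower semicontinuity; and you should isolate the final gap explicitly, since it is the only place the $\alpha$-recursion (rather than a Poincar\'e curve) is needed.
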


\begin{proof}
 By H\"older's inequality, we can assume that $\max\{1,p/2\}\le p'$. This
assumption allows us to choose  $M$ below
independent of $p$. This property, in turn, is beneficial in Remark \ref{r.quantitative}, where
a quantitative analysis is performed.
  Since    $\Omega$ satisfies a pointwise $p$-Hardy inequality, by Lemma \ref{lem:HardyChar} it satisfies  inequality  \eqref{eq:HardyChar} with constants   $C_\Gamma>0$, $\nu_\Gamma>C_{\mathrm{QC}}$  and $\kappa_\Gamma\ge 1$.   Also,   let $C_{\mathrm{A}}>0$, $\nu_{\mathrm{A}} >C_{\mathrm{QC}}$ and $\kappa_{\mathrm{A}}\ge 1$   be the constants from  inequality  \eqref{eq:PIchar} in Lemma \ref{lem:PIchar},   for the exponent $p'<p$. 
Without loss of generality, we may  assume that 
  $\kappa_\Gamma=\kappa_{\mathrm{A}}=:\kappa$
and that $\nu_\Gamma=\nu_{\mathrm{A}}=:\nu$.  

It suffices to prove that there exists  $k \in \N$, 
$  K,S\in [1,\infty)$, $N \in (C_{\mathrm{QC}},\infty)$,   $M>1$   and  $\delta \in (0,1)$ such that,
   for each $q\in (p',p)$ and every $\tau>0$, we have  
 \begin{equation}\label{eq:iterationEst}
     \alpha_{q,\Omega}(N,  K ,\tau) \leq S \tau + \delta \max_{i=1, \dots, k} \big(M^{-iq/p} \alpha_{q,\Omega}(N,K,M^i \tau)\big)\,.
 \end{equation}
Indeed, from this inequality and Lemma \ref{lem:alphaEst}, we get
\[\alpha_{q,\Omega}(N,K,\tau) \leq S \tau + \delta M^{k\frac{p-q}{p}} \alpha_{q,\Omega}(N,K,\tau)\qquad   \text{ for all }q\in (p',p)\text{ and }\tau>0 \,.
\]
  In order to absorb the last term on the right to the left,   we need   $\delta M^{k\frac{p-q}{p}} < 1$.   This    can be ensured  by  choosing   $q\in (p',p)$   so close to $p$ that \[0<p-q < \frac{p\ln(\frac{1}{\delta})}{k\ln(M)}\,.\]
   With this choice  of $q$ we find for all $\tau>0$ that
\[\alpha_{q,\Omega}(N,K,\tau) \leq \left(\frac{S}{1-\delta M^{k\frac{p-q}{p}}}\right)\tau=:C_\alpha \tau\,.\]
Then, this inequality holds also for $\tau=0$, which is seen  by using monotonicity property  of the $\alpha$-function, see Lemma \ref{lem:alphaEst}. Thus, the desired inequality \eqref{e.main_dest} follows from \eqref{eq:iterationEst}.
 Hence, we are left with proving inequality \eqref{eq:iterationEst}. 
 
 At this stage, we fix the auxiliary parameters
\[
  K=4\kappa\,,\quad N=3\nu\,,\quad M = 4\,,\quad \delta =  \frac{1}{4}\,. \]
 We also fix  $k\in\N$ so large that $  C_{\Gamma}^p  \frac{2^p D^5}{k^{p-1}}<\delta^p$, that is,\ $k> ( 2^{p}\delta^{-p} C_{\Gamma}^p   D^5 )^{\frac{1}{p-1}}$. 
The last auxiliary parameter is defined to be
$S= 1+M^{k}  \nu +  3 C_{\mathrm{A}} M^{k}$.  
We also let
 $q\in (p',p)$ and $\tau>0$.
Now, the overall strategy is as follows: we will construct, for any   $x\in\Omega$   and any $g \in \mathcal{E}^{K,\tau}_{q,x,\Omega}$, a curve $\gamma \in \Gamma(X)^N_{x,\Omega^c}$  such that 
\begin{equation}\label{e.desired_curve}
\int_\gamma g \,ds \leq S \tau d(x, \Omega^c) + \delta \max_{i=1, \dots, k} \big(M^{-i q/p} \alpha_{q,\Omega}(N,K,M^i \tau)\big) d(x, \Omega^c) \,.
\end{equation}
Dividing both sides of this estimate by $d(x,\Omega^c)$, and then taking the supremum over $x$ and $g$ as above,  proves inequality  \eqref{eq:iterationEst}.

Let us fix $x\in\Omega$ and $g \in \mathcal{E}^{K,\tau}_{q,x,\Omega}$. For each $i\ge 1$, we write 
 \begin{align*}
 E_i :&= \{z \in \Omega  \mid  \M_{q,\kappa   d(x,\Omega^c)} g(z) > M^i \tau\}\,,
 \end{align*} and define a bounded function $h\colon X\to [0,\infty)$ by setting 
\[
h = \frac{1}{k}\sum_{i=1}^k \mathbf{1}_{E_i} M^{iq/p}\,.
\]
Since   $E_j \supset E_i$   if $j\le i$
 and $p/2\le p'<q<p$,  we have
\begin{align*}
h^p &\leq \frac{1}{k^p} \sum_{j=1}^k \bigg(\sum_{i=1}^j M^{iq/p}\bigg)^p \mathbf{1}_{E_j} \le \frac{2^p}{k^p} \sum_{j=1}^k \mathbf{1}_{E_j} M^{jq}\,.
\end{align*}
In the final estimate, we also use the equation $M=4$ to obtain the factor $2^p$.
Observe that  $\mathbf{1}_{E_i}\in LC_0(\Omega)$ 
 since $E_i$ is open, for each $i=1,\ldots,k$. Hence, we have
 $h\in LC_0(\Omega)  \subset LC(X) $. 
 By  sublinearity and monotonicity of the maximal function,  Lemma \ref{lem:HLmaxmaxest}, and 
the assumption that $g \in \mathcal{E}^{K,\tau}_{q,x,\Omega}$,   where  $K=4\kappa$,   we obtain
  \begin{equation}\label{e.essential}
  \begin{split}
    \left(\M_{p,\kappa d(x,\Omega^c)} h (x) \right)^p &  \leq \frac{2^p}{k^p}\sum_{j=1}^k (\M_{1,\kappa d(x,\Omega^c)} \mathbf{1}_{E_j}(x)) M^{jq} \\
						       &  \leq \frac{2^p D^5}{k^p}\sum_{j=1}^k \frac{(\M_{q, 4\kappa  d(x,\Omega^c)} g(  x ))^q}{M^{jq}\tau^q} M^{jq}\leq \frac{2^p D^5}{k^{p-1}}\,.
\end{split}			       
\end{equation}
Then, by the choice of $k$ and estimate \eqref{e.essential}, we obtain that 
 \[
   C_\Gamma \M_{p,\kappa d(x,\Omega^c)} h (x) < \delta\,,
 \]
and therefore from Lemma \ref{lem:HardyChar} with exponent $p$ we obtain a 
curve $\gamma_0 \in \Gamma(X)^{\nu}_{x,\Omega^c}$, which is parametrized by arc length, such that
 \begin{equation}\label{e.h_app}
 \int_{\gamma_0} \frac{1}{k}\sum_{i=1}^k \mathbf{1}_{E_i} M^{iq/p} \,ds = \int_{\gamma_0} h\,ds \leq \delta d(x,\Omega^c)\,,
 \end{equation}
 and
 \begin{equation}\label{e.gamma0_Lestimate}
\len(\gamma_0) \leq   \nu  d(x,\Omega^c)\,.
\end{equation}
Clearly, without loss of generality, we may also assume that $\gamma_0([0,\len(\gamma_0)))\subset \Omega$. 

By inequality \eqref{e.h_app}, there  exists $i_0 \in \{1,\ldots,k\}$  such that
\begin{equation}\label{e.apriori}
\int_{\gamma_0} \mathbf{1}_{E_{i_0}}\,ds \leq \delta M^{-i_0q/p} d(x,\Omega^c)\,.
\end{equation}
Let $O = \gamma_0^{-1}(E_{i_0})$ and denote $  T   = [0, \len(\gamma_0)] \setminus O$.
By the lower semicontinuity of $g$   and the definition of $E_{i_0}$   we have, for all $t \in T\setminus \{\len(\gamma_0)\}$,
\begin{equation}\label{e.K_estimate}
g(\gamma_0(t))\le \M_{q,\kappa d(x,\Omega^c)} g(\gamma_0(t))\le  M^{i_0} \tau\,. 
\end{equation} 
  Since $E_{i_0}$ is open in $X$,   the set $O$ is relatively open
  in $[0,\len(\gamma_0)]$.   Observe that $0\not\in O$ since $g\in \mathcal{E}^{K,\tau}_{q,x,\Omega}$ and $K>\kappa$.
 Likewise $\len(\gamma_0)\not\in O$ since $\gamma_0(\len(\gamma_0))\in \Omega^c$.  
  There are now essentially  two   different cases to be handled; the remaining cases of corresponding finite unions   are treated
in a similar way.   Namely, the two cases are:  
 \begin{equation}\label{e.cases}
O = \bigcup_{i \in \N} (a_i,b_i)\quad \text{ or }\quad O= (a_0,b_0)\cup  \bigcup_{i \in \N} (a_i,b_i)\,.
\end{equation}
  The second case takes place,  if 
 there exists $0<t_0<\len(\gamma_0)$ such that
$\gamma_0(t)\in E_{i_0}$ for every $t_0<t<\len(\gamma_0)$.   
In both cases 
the intervals   (called `gaps')   are pairwise disjoint and
$a_i<b_i<\len(\gamma_0)$ for each $i\in\N$, and in the   second    case $a_0<b_0=\len(\gamma_0)$.   Moreover, in both cases $\gamma_0(a_i),\gamma_0(b_i)\in \Omega\setminus E_{i_0}$ for each $i\in \N$,
and in the second case $\gamma_0(a_0)\in \Omega\setminus E_{i_0}$.
We remark that in the second case  
$\gamma_0(b_0)\not\in \Omega\setminus E_{i_0}$, and this
special property of the `final gap' $(a_0,b_0)$ 
distinguishes it from the remaining gaps.
Write $d_i\defeq d(\gamma_0(a_i),\gamma_0(b_i))$  for each $i$. Then, by inequality \eqref{e.apriori}, we have
\begin{equation}\label{e.d_i_estimates}
\sum_i d_i \leq \sum_i \len(\gamma_0\vert_{[a_i,b_i]})=
\sum_i \int_{\gamma_0\vert_{[a_i,b_i]}} \mathbf{1}_{E_{i_0}}\,ds
\le \int_{\gamma_0} \mathbf{1}_{E_{i_0}}\,ds \le 
\delta M^{-i_0 q/p} d(x,\Omega^c)\,.
\end{equation}
There are now two cases to be treated  in a case study.
 
Let us first consider the case $O = \bigcup_{i \in \N} (a_i,b_i)$. Fix $i\in \N$. Since   $\gamma_0(a_i), \gamma_0(b_i)\in   \Omega\setminus E_{i_0} $, there holds
\begin{equation}\label{e.M_q}
\M_{q,\kappa d(x,\Omega^c)} g(\gamma_0(a_i)) \leq M^{i_0} \tau\quad \text{ and }\quad \M_{q,\kappa d(x,\Omega^c)} g(\gamma_0 (b_i)) \leq M^{i_0} \tau\,.
\end{equation}
Lemma \ref{lem:PIchar} applied to the  $p'$-Poincar\'e inequality,   and to the   two points $\gamma_0(a_i)$ and $ \gamma_0 (b_i)$,  provides
us with a curve    $\gamma^i \co [a_i,b_i] \to X$ such that
$\gamma^i(a_i)= \gamma_0(a_i)$, $\gamma^i(b_i)= \gamma_0(b_i)$, 
\begin{equation}\label{eq:lenestgammai}
\len(\gamma^i) \leq   \nu   d(\gamma_0(a_i), \gamma_0(b_i)) =   \nu  d_i\,,
 \end{equation}
and,   by using also the fact that $p'< q$ and H\"older's inequality,   
\begin{equation}\label{e.i_curve}
\begin{split}
    &\int_{\gamma^i} g \,ds\\      & \leq   C_{\mathrm{A}}  d(\gamma_0(a_i),\gamma_0(b_i))\left( \M_{q,\kappa d(\gamma_0(a_i),\gamma_0(b_i))} g (\gamma_0(a_i)) + \M_{q,\kappa d(\gamma_0(a_i),\gamma_0(b_i))} g (\gamma_0(b_i))\right)\\&\qquad +
    \underbrace{C_{\mathrm{A}}  d(\gamma_0(a_i),\gamma_0(b_i))M^{i_0} \tau}_{>0}\,. 
        \end{split}
    \end{equation} 
  We observe that $\kappa d(\gamma_0(a_i),\gamma_0(b_i)) \leq \kappa d(x,\Omega^c)$,
 which follows from  \eqref{e.d_i_estimates} since
\[d(\gamma_0(a_i),\gamma_0(b_i))=d_i   \le \sum_i d_i\leq d(x,\Omega^c)\,.\]
This estimate together with \eqref{e.M_q} and \eqref{e.i_curve} yields  
   \begin{equation}\label{eq:intestgammai}
   \begin{split}
 \int_{\gamma^i} g \,ds   & \leq
    3 C_{\mathrm{A}} d(\gamma_0(a_i),\gamma_0(b_i)) M^{i_0} \tau = 3 C_{\mathrm{A}} M^{i_0} \tau d_i\,.
    \end{split}    
\end{equation}
  Let us now   define   a curve   $\gamma \co [0, \len(\gamma_0)] \to X$ by  setting  $\gamma(t) = \gamma_0(t)$ if $t \in   T $ and $\gamma(t) = \gamma^i(t)$ if $t \in (a_i,b_i)$  for some $i\in\N$   that is uniquely determined by $t$.   Then, by the length  estimates \eqref{e.gamma0_Lestimate} and \eqref{eq:lenestgammai},
  followed by inequality \eqref{e.d_i_estimates},   we obtain that
 \begin{align*}
 \len(\gamma) &\leq \len(\gamma_0) + \sum_{i\in\N} \len(\gamma^i) \\&   \leq     \nu   d(x,\Omega^c) +  \nu  \sum_{i\in\N}      d_i  \leq   2 \nu  d(x,\Omega^c) \leq N d(x,\Omega^c)\,.
 \end{align*}
  From this it follows that    $\gamma \in   \Gamma(X)^N_{x,\Omega^c}$;  we remark that the required continuity and connecting
  properties of $\gamma$ are straightforward establish,
  and  we omit the details.   Also,
  by 
inequalities \eqref{e.gamma0_Lestimate}, \eqref{e.K_estimate}, \eqref{e.d_i_estimates} and \eqref{eq:intestgammai},  we have
  \begin{align*}
      \int_\gamma g \,ds &
         =  \int_{T} g(\gamma_0(t)) \,dt  + 
      \sum_{i\in \N} \int_{\gamma^i} g \,ds \\
                        &  \leq M^{i_0} \tau    \nu   d(x,\Omega^c) + 3 C_{\mathrm{A}} M^{i_0}  \tau\delta M^{-i_0 q/p}   d(x,\Omega^c) \\
                        & \leq (M^{i_0}   \nu  +  3 C_{\mathrm{A}}  M^{i_0})\tau d(x,\Omega^c)\\&
                        \le S\tau d(x,\Omega^c)\,.
 \end{align*}
In the present case, we have now constructed a  curve $\gamma$ such that
 inequality \eqref{e.desired_curve} holds,  
even without the absorption term. Hence, we  are done   in the first case of \eqref{e.cases}.  
 
  Next we   consider the  slightly more complicated  case $O= (a_0,b_0)\cup  \bigcup_{i \in \N} (a_i,b_i)$, in which there
 is also a final gap $(a_0,b_0)$ such that   $b_0=\len(\gamma_0)$   and $\gamma_0(b_0)\in \Omega^c$. 
 As in the previous case, for   each $i\in \N$, we can   first    construct curves $\gamma^i \co [a_i,b_i] \to X$ such that 
 \begin{equation}\label{eq:lenestgammaiR}
    \len(\gamma^i) \leq   \nu    d(\gamma_0 (a_i), \gamma_0 (b_i)) =   \nu   d_i\,,
 \end{equation}
  and
   \begin{equation}\label{eq:intestgammaiR}
    \int_{\gamma^i} g \,ds \leq 3  C_{\mathrm{A}} d( \gamma_0(a_i), \gamma_0(b_i)) M^{i_0} \tau = 3 C_{\mathrm{A}} M^{i_0} \tau d_i\,.
 \end{equation}
  For   $i=0$ we have to be more careful,
since $\gamma_0(b_0)\not\in\Omega\setminus E_{i_0}$.
  We now proceed as follows.   By using \eqref{e.d_i_estimates} and the 
  equality $K\delta= \kappa$, we first observe that
\[
Kd(\gamma_0(a_0),\Omega^c)\le Kd(\gamma_0(a_0),\gamma_0(b_0))
=Kd_0\le  K \delta d(x,\Omega^c)\le \kappa d(x,\Omega^c)\,.
\]
On the other hand, we still have that $\gamma_0(a_0) \in   \Omega\setminus E_{i_0} $,  and thus
\[
\M_{q,Kd(\gamma_0(a_0),\Omega^c)} g(\gamma_0(a_0)) \le \M_{q,\kappa d(x,\Omega^c)} g(\gamma_0(a_0)) \leq M^{i_0} \tau\,.
\] 
From this it follows that $g\in \mathcal{E}^{K,M^{i_0} \tau}_{q,\gamma_0(a_0),\Omega}$.
 By definition \eqref{eq:alphaDef}  of the function $\alpha_{q,\Omega}(N,K,M^{i_0} \tau)$,   we obtain  a curve   $\gamma^0\co [a_0,b_0]\to X$   connecting $\gamma_0(a_0) \in \Omega$ to $\Omega^c$ such that
  \begin{equation}\label{eq:lenestgammaiInit}
\len(\gamma^0) \leq   Nd(\gamma_0(a_0),\Omega^c) \le N  d(\gamma(a_0),\gamma(b_0)) = N d_0
 \end{equation}
and  
\begin{equation}\label{eq:intestgammaiR_0}
\begin{split}
\int_{\gamma^0} g\,ds &\leq d(\gamma_0(a_0),\Omega^c)\alpha_{q,\Omega}(N,K,M^{i_0}\tau) +   \underbrace{\tau d(x,\Omega^c)}_{>0}  \\
&\le  d_0 \alpha_{q,\Omega}(N,K,M^{i_0} \tau)+\tau d(x,\Omega^c)\,.
    \end{split}
 \end{equation}
Now we define   $\gamma$   as in the first case  but using also the 
final gap $(a_0,b_0)$ by setting $\gamma(t)=  \gamma^0(t) $ for 
  every   $t \in (a_0,b_0  ] $. Then by 
\eqref{e.gamma0_Lestimate},  \eqref{e.d_i_estimates}, \eqref{eq:lenestgammaiInit}, and 
our choice of $N$  and $\delta$,  we obtain
 \begin{align*}
 \len(  \gamma ) &\leq \len(\gamma_0) + \len(\gamma^0) + \sum_{i\in \N} \len(\gamma^i) \\
 &\leq (  \nu  +   \delta N  + \nu) d(x,\Omega^c) \leq N d(x,\Omega^c)\,.
 \end{align*}
 Thus, we find that $  \gamma  \in \Gamma(X)^N_{x,\Omega^c}$. Finally, by inequalities  \eqref{e.gamma0_Lestimate}, \eqref{e.K_estimate}, \eqref{e.d_i_estimates}, \eqref{eq:intestgammaiR}, and \eqref{eq:intestgammaiR_0} we have 
\begin{align*}
\int_{ \gamma } g\,ds    &=  \int_{T} g(\gamma_0(t))\,dt   + 
\sum_{i\in\N} \int_{\gamma^i} g \,ds + \int_{\gamma^0} g \,ds  \\
                    & \leq  M^{i_0} \tau    \nu   d(x,\Omega^c) + 
                     3 C_{\mathrm{A}} M^{i_0}  \tau   d(x,\Omega^c) + d_0 \alpha_{q,\Omega}(N,K,M^{i_0} \tau)+  \tau d(x,\Omega^c)  \\
                        & \leq  S\tau d(x,\Omega^c) + \delta M^{-i_0 q/p} \alpha_{q,\Omega}(N,K,M^{i_0} \tau) d(x,\Omega^c)\,.
 \end{align*}
Recall that $i_0\in \{1,\ldots,k\}$.  
Hence, the desired estimate \eqref{e.desired_curve} for $\gamma$ follows
and thus the proof is complete.
\end{proof}

\section{Main results}\label{s.main}

As a consequence of Theorem \ref{t.converse_and_impro}
and Lemma \ref{lem:alphaClass}, we obtain the following theorem. 
It is the main result of the present paper.

\begin{theorem}\label{c.main}
Let $1\le  p'<p<\infty$. Assume that $X$ supports
a $p'$-Poincar\'e inequality \eqref{e.poincare}.
Let $\emptyset\not=\Omega\subsetneq X$ be an open set
that satisfies a pointwise $p$-Hardy inequality \eqref{e.pointwise}.
Then there exists an exponent   $q\in (p',p)$ such that  
$\Omega$ satisfies a pointwise $q$-Hardy inequality. 
\end{theorem}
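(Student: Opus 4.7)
The plan is a direct assembly of the two main tools developed earlier in this paper, namely Theorem \ref{t.converse_and_impro} and Lemma \ref{lem:alphaClass}. Both statements take as their hypotheses a pointwise $p$-Hardy inequality on $\Omega$ together with a $p'$-Poincar\'e inequality on $X$, which are precisely the standing assumptions of Theorem \ref{c.main}. Thus the proof reduces to observing that the two black boxes compose correctly.

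The first step is to invoke Theorem \ref{t.converse_and_impro} with the given exponents $1\le p'<p<\infty$. Its conclusion furnishes an intermediate exponent $q\in(p',p)$ together with constants $N>C_{\mathrm{QC}}$, $K\ge 1$ and $C_\alpha>0$ such that the linear bound $\alpha_{q,\Omega}(N,K,\tau)\le C_\alpha\tau$ holds for every $\tau\ge 0$. The dependencies are all compatible: the first argument $N$ satisfies $N>C_{\mathrm{QC}}$ and the second $K\ge 1$ matches the format expected in the definition of $\alpha_{q,\Omega}$.

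The second step is to feed this bound into Lemma \ref{lem:alphaClass} at the exponent $q$, with the constants $N$, $K$ and $C_\alpha$ just produced. The hypothesis of that lemma is exactly the linear-in-$\tau$ control on $\alpha_{q,\Omega}$ supplied by Step~1, and its conclusion is that $\Omega$ satisfies a pointwise $q$-Hardy inequality. Since $q\in(p',p)$, this is the desired statement.

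Hence the proof of Theorem \ref{c.main} is a one-line chain: Theorem \ref{t.converse_and_impro} $\Rightarrow$ the $\alpha$-bound $\Rightarrow$ Lemma \ref{lem:alphaClass} $\Rightarrow$ pointwise $q$-Hardy. There is no substantive obstacle at this stage; the genuine work lies upstream in the curve-splicing construction inside Theorem \ref{t.converse_and_impro}, where the iteration estimate \eqref{eq:iterationEst} is produced by handling ordinary gaps with Poincar\'e-inequality curves (Lemma \ref{lem:PIchar}) and the possible final gap reaching $\Omega^c$ by the $\alpha_{q,\Omega}(N,K,M^{i_0}\tau)$ absorption term, and then chosen $q$ close enough to $p$ so that $\delta M^{k(p-q)/p}<1$ allows the absorption. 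At this final stage only the observation that the two black boxes fit together is required, so no further maximal-function, approximation, or curve-theoretic work is needed.
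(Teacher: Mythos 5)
Your proposal is correct and is essentially identical to the paper's own argument: the paper obtains Theorem~\ref{c.main} directly as a consequence of Theorem~\ref{t.converse_and_impro} followed by Lemma~\ref{lem:alphaClass}, exactly as you describe.
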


\begin{remark}\label{r.quantitative}
The conclusion of Theorem \ref{c.main} reads as follows: there exists $q\in (p',p)$
such that  $\Omega$ satisfies a pointwise $q$-Hardy
inequality. We can establish a more quantitative result. Indeed, by examining the proof of Theorem \ref{t.converse_and_impro},
we see that it runs through 
if  $p$, $p'$ and $q$ satisfy the following inequalities 
\[
\max\{1,p/2\}\le p'<q<p\qquad \text { and }\qquad\delta M^{k\frac{p-q}{p}}<1\,,\] where $M=4$,
 $\delta=\frac{1}{4}$ and $\N\ni k>(2^{p}\delta^{-p}C_{\Gamma}^p  D^5 )^{\frac{1}{p-1}}$.
Here $C_\Gamma>0$ is the constant
appearing in inequality \eqref{eq:HardyChar}. This inequality characterizes the pointwise $p$-Hardy inequality.
   Thus, we can choose
\begin{align*}
k\defeq \lceil (8C_\Gamma)^{\frac{p}{p-1}}D^{\frac{5}{p-1}} +1\rceil > (8C_\Gamma)^{\frac{p}{p-1}}D^{\frac{5}{p-1}} =
(2^{p} \delta^{-p} C_{\Gamma}^p   D^5 )^{\frac{1}{p-1}}\,.
\end{align*}
Then
$\delta M^{k\frac{p-q}{p}}<1
\Leftrightarrow 4^{k\frac{p-q}{p}}<4
\Leftrightarrow p-q<\frac{p}{k}$. On the other hand, by examining the proof of
Lemma \ref{lem:HardyChar}, we 
have $C_\Gamma=4C_{\mathrm{H}}$, where
$C_{\mathrm{H}}>0$ is the constant in the assumed
pointwise $p$-Hardy inequality \eqref{e.pointwise}.
All in all,  we find that
if the assumptions
of Theorem \ref{c.main} hold,
\[
\max\{1,p/2\}\le p'<q<p\qquad\text{ and }\qquad p-q<\frac{p}{ \lceil(32C_{\mathrm{H}})^{\frac{p}{p-1}}D^{\frac{5}{p-1}} +1\rceil}\,,
\]
then  $\Omega$ satisfies a pointwise $q$-Hardy inequality.
Rather similar quantitative bounds
 for the self-improvement of $p$-Poincar\'e inequalities can be found in \cite{SEB}.
\end{remark}

\begin{theorem}\label{t.consequences}
Let $1\le  p'<p<\infty$. Assume that $X$ supports
a $p'$-Poincar\'e inequality. 
Let $\emptyset\not=\Omega\subsetneq X$ be an open set.
Then the following conditions are equivalent:
\begin{itemize}
\item[(A)] The open set $\Omega$ satisfies
a pointwise $p$-Hardy inequality;
\item[(B)] There
are constants $\nu>C_{\mathrm{QC}}$, $\kappa\ge 1$  and $C_\alpha >0$ such that,
for any $\tau\ge 0$, we have
\[
\alpha_{p,\Omega}(\nu,\kappa,\tau)\le C_\alpha \tau\,.
\]
\item[(C)] There are constants   $C_\Gamma>0$, $\nu>C_{\mathrm{QC}}$ and $\kappa\ge 1 $ such that for  each  non-negative and bounded $g \in  LC_0(\Omega)$ and every  $x \in \Omega$, we have
\[
\inf_{\gamma \in \Gamma(X)^\nu_{x,\Omega^c}} \int_{\gamma} g \,ds \leq C_\Gamma \,d(x,\Omega^c) \left( \M_{p,\kappa d(x,\Omega^c)} g (x) \right).
\]
\end{itemize}
\end{theorem}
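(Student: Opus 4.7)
The plan is to close the cycle $(B) \Rightarrow (A) \Rightarrow (B)$ while separately establishing $(A) \Leftrightarrow (C)$. The implication $(B) \Rightarrow (A)$ is Lemma \ref{lem:alphaClass} verbatim. For $(A) \Rightarrow (B)$, I would invoke Theorem \ref{t.converse_and_impro} to produce an exponent $q \in (p', p)$ and constants $N > C_{\mathrm{QC}}$, $K \ge 1$, $C_\alpha > 0$ with $\alpha_{q, \Omega}(N, K, \tau) \le C_\alpha \tau$, and then upgrade to the exponent $p$ using H\"older's inequality applied on each ball. Since $q \le p$, this gives $\M_{q, r} g(x) \le \M_{p, r} g(x)$ for every non-negative $g$, whence $\mathcal{E}^{K, \tau}_{p, x, \Omega} \subset \mathcal{E}^{K, \tau}_{q, x, \Omega}$. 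Taking the supremum in \eqref{eq:alphaDef} over this smaller class yields $\alpha_{p, \Omega}(N, K, \tau) \le \alpha_{q, \Omega}(N, K, \tau) \le C_\alpha \tau$, which is (B).

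The implication $(A) \Rightarrow (C)$ is immediate: by Lemma \ref{lem:HardyChar} the pointwise $p$-Hardy inequality is equivalent to the curve estimate for every non-negative bounded $g \in LC(X)$, and (C) is obtained by restricting to $g \in LC_0(\Omega) \subset LC(X)$.

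The substantive remaining step is $(C) \Rightarrow (A)$, for which I would adapt the backward direction of Lemma \ref{lem:HardyChar}. Given $u \in \Lip_0(\Omega)$ with bounded upper gradient $g$ and $x \in \Omega$, approximate $g$ by a pointwise increasing sequence $(g_N)$ of non-negative simple Borel functions converging uniformly to $g$. By Lemma \ref{lem:approx} one obtains $g_{N, x, \varepsilon} \in LC(X)$ with $g_N \le g_{N, x, \varepsilon}$ on $X \setminus \{x\}$ and
\[
\M_{p, \kappa d(x, \Omega^c)} g_{N, x, \varepsilon}(x) \le \M_{p, \kappa d(x, \Omega^c)} g(x) + \varepsilon.
\]
The key observation is that $h_{N, \varepsilon} \defeq g_{N, x, \varepsilon} \mathbf{1}_\Omega$ lies in $LC_0(\Omega)$: a non-negative lower semicontinuous function multiplied by the indicator of an open set is lower semicontinuous on $X$, since at any boundary point $y \in \Omega^c$ the required lower bound $\liminf_{z \to y} h_{N, \varepsilon}(z) \ge 0 = h_{N, \varepsilon}(y)$ is automatic. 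Applying (C) to $h_{N, \varepsilon}$ then yields a curve $\gamma \in \Gamma(X)^\nu_{x, \Omega^c}$ with
\[
\int_\gamma h_{N, \varepsilon} \, ds \le C_\Gamma d(x, \Omega^c) \bigl(\M_{p, \kappa d(x, \Omega^c)} g(x) + \varepsilon\bigr) + \delta,
\]
where $\delta > 0$ is chosen arbitrarily. Truncating $\gamma$ at its first exit time $t^* = \inf\{t : \gamma(t) \in \Omega^c\}$ from $\Omega$, the upper gradient property together with $u|_{\Omega^c} = 0$ give $|u(x)| \le \int_{\gamma|_{[0, t^*]}} g \, ds$. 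Because $\gamma|_{[0, t^*)} \subset \Omega$, on this sub-curve $g = g \mathbf{1}_\Omega$, and off the single point $x$ we have $g \mathbf{1}_\Omega \le h_{N, \varepsilon} + (g - g_N)$. The uniform error $\sup_X(g - g_N)$ is multiplied only by the length $\nu d(x, \Omega^c)$, so letting $N \to \infty$ and then $\varepsilon, \delta \to 0$ produces the pointwise $p$-Hardy inequality \eqref{e.pointwise} with $C_{\mathrm{H}} = C_\Gamma$.

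The principal obstacle is this $(C) \Rightarrow (A)$ direction, where one must reconcile the restricted class $LC_0(\Omega)$ in the hypothesis with an arbitrary upper gradient $g$ that need not vanish on $\Omega^c$. Combining the multiplication by $\mathbf{1}_\Omega$, which produces a function in $LC_0(\Omega)$, with truncation of the curve at the first exit time from $\Omega$, so that only the values of $g$ on $\Omega$ matter, precisely resolves the mismatch.
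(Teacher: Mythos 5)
Your proposal is correct, and for the implications (A)$\,\Leftrightarrow\,$(B) and (A)$\,\Rightarrow\,$(C) it follows the paper's proof exactly. The interesting divergence is in (C)$\,\Rightarrow\,$(A). The paper disposes of this in one sentence by observing that the test function $h=\frac{1}{k}\sum_{i=1}^k \mathbf{1}_{E_i}M^{iq/p}$ constructed in the proof of Theorem~\ref{t.converse_and_impro} already lies in $LC_0(\Omega)$, so that the entire self-improvement machinery (and hence the chain (C)$\,\Rightarrow\,$(B)$\,\Rightarrow\,$(A)) can be driven by hypothesis (C) alone. You instead give a direct argument by reworking the backward direction of Lemma~\ref{lem:HardyChar}: you truncate the Lemma~\ref{lem:approx} approximant by $\mathbf{1}_\Omega$ (correctly noting that the product of a non-negative bounded lower semicontinuous function with the indicator of an open set is again lower semicontinuous, and hence in $LC_0(\Omega)$), apply (C) to this truncated function, and then cut the resulting curve at its first exit time from $\Omega$, so that only the values of $g$ on $\Omega$ ever enter the line integral. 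Each approach buys something: your route is more elementary and, notably, proves (A)$\,\Leftrightarrow\,$(C) without invoking the $p'$-Poincar\'e inequality at all, whereas the paper's one-line argument for (C)$\,\Rightarrow\,$(A) genuinely depends on the Poincar\'e hypothesis because it passes through Theorem~\ref{t.converse_and_impro}. On the other hand, the paper's route is essentially free once Theorem~\ref{t.converse_and_impro} is proved, and it simultaneously exhibits (C) as a self-improving condition (the conclusion comes out for an exponent $q<p$), which your direct argument does not give. One small point you gloss over, as does the paper's Lemma~\ref{lem:HardyChar}: to pass from the bound on $g_{N,x,\varepsilon}$ (which dominates $g_N$ only off $\{x\}$) to a bound on the line integral, one needs the exceptional set $\gamma^{-1}(\{x\})$ to have measure zero; this is standard for arc-length parametrized curves after a further truncation at $\sup\{t:\gamma(t)=x\}$, and is worth a sentence.
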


\begin{proof}
The implication from (A) to (B) follows from Theorem \ref{t.converse_and_impro} and the pointwise estimate
$\alpha_{p,\Omega}\le \alpha_{q,\Omega}$ that trivially is valid if $p\ge q$. The converse follows from Theorem \ref{lem:alphaClass}.
The implication from (A) to (C) is a consequence of Lemma \ref{lem:HardyChar}. On the other hand, by inspecting the proof of Theorem \ref{t.converse_and_impro}, we find that
condition (C) implies (A). In particular, the test function
$h$ that is constructed in the proof actually belongs to $LC_0(\Omega)$.
\end{proof}

\begin{remark}
By combining Theorem \ref{c.main} and Theorem \ref{t.consequences} one obtains self-improvement of further inequalities (B) and (C) in Theorem \ref{t.consequences}; these inequalities  are both 
 equivalent with the pointwise $p$-Hardy inequality. We remark that inequality
(C) differs from the characterizing condition appearing in  Lemma~\ref{lem:HardyChar} in that
the test functions $g$ in (C) are required to vanish outside $\Omega$.
The self-improvement results for the conditions (B) and (C) are naturally also subject to a better 
$p'$-Poincar\'e inequality; we omit the explicit formulations.
\end{remark}

\bibliographystyle{abbrv}
\def\cprime{$'$}

\end{document}